\documentclass[11pt,reqno]{amsart}
\usepackage{amssymb,mathtools,calc,verbatim,enumitem,tikz,url,hyperref,mathrsfs,cite,fullpage}
\usepackage{bbm}
\usepackage{stmaryrd}
\usepackage{textcomp}
\usepackage{setspace}
\usepackage{amssymb}
\usepackage{amsthm}
\usepackage{amsmath}
\usepackage{graphicx}
\usepackage{marvosym}
\usepackage{empheq}
\usepackage{latexsym}
\usepackage{fontenc}
\usepackage{color}
\usepackage{hyperref}
\usepackage{cleveref}
\usepackage{dsfont}

\newcommand{\ex}{\operatorname{ex}}

\usepackage{todonotes}

\newenvironment{poc}{\begin{proof}[Proof of claim]}{\end{proof}}

\newtheorem{theorem}{Theorem}[section]
\newtheorem{lemma}[theorem]{Lemma}
\newtheorem{corollary}[theorem]{Corollary}

\newtheorem{observation}[theorem]{Observation}
\newtheorem{proposition}[theorem]{Proposition}
\newtheorem{claim}[theorem]{Claim}
\newtheorem*{claim*}{Claim}
\newtheorem{problem}[theorem]{Problem}
	
\theoremstyle{definition}
\newtheorem{definition}[theorem]{Definition}

\newtheorem*{qu*}{Question}
\theoremstyle{remark}
\newtheorem{remark}[theorem]{Remark}

\newcommand{\Fq}{\mathbb F_q}

\newcommand\cC{\mathcal{C}}

\newcommand\cL{\mathcal{L}}

\renewcommand\le{\leqslant}
\renewcommand\ge{\geqslant}

	\def\<{\langle }
	\def\>{\rangle }

\begin{document}

\title{Constructor--Blocker games forbidding even cycles}
\author{
Lanchao Wang 
\and 
Zhifei Yan}

\address{School of Mathematics, Nanjing University, Nanjing, China, and ECOPRO, Institute for Basic Science, 55 Expo-ro, Yuseong-gu, Daejeon, 44126, Korea}\email{lanchaowang@foxmail.com}

\address{ECOPRO, Institute for Basic Science, 55 Expo-ro, Yuseong-gu, Daejeon, 34126, Korea}\email{zhifeiyan@ibs.re.kr}

\thanks{}

\begin{abstract}The Constructor--Blocker game is played on the edge set of $K_n$.
Two players alternately claim previously unclaimed edges. Constructor aims
to maximize the number of copies of a target graph $H$ in her graph while
keeping it $F$-free throughout the game, whereas Blocker aims to minimize
this number. When both players play optimally and Constructor moves first,
the final number of copies of $H$ in Constructor's graph is called the score
of the game and is denoted by $g(n,H,F)$. Recently, Balogh, Chen, and English
systematically studied this game for non-bipartite forbidden graphs $F$.
However, bipartite forbidden graphs present additional difficulties.

In this paper, we focus on the case in which the forbidden graph is an even
cycle. First, using a finite-field-geometric construction, we confirm a conjecture of Balogh,
Chen and English by proving 
\[
g(n,K_3,C_4)=\Theta(n^{3/2}).
\]
We also establish two-sided bounds for $g(n,K_3,C_{2k})$ that relate the
game score to classical extremal numbers. Our proofs reveal a structural
distinction between the cases $k=2$ and $k\ge3$: a vertex-duplication method
works for longer even cycles, but it necessarily creates $4$-cycles and
therefore cannot be used in the $C_4$-free game. Finally, we determine the exact
order of $g(n,C_t,C_{2k})$ for all $k\ge2$ and all $t\ge4$.
\end{abstract}
	
	\maketitle

\section{Introduction}

The study of Tur\'an-type problems is fundamental to extremal graph theory. One primary goal is to determine the generalized Tur\'an number $\ex(n, H, F)$, which represents the maximum number of copies of a graph $H$ within an $n$-vertex graph that avoids a forbidden subgraph $F$. Recently, Patk\'os, Stojakovi\'c, and Vizer \cite{PSV24} introduced a dynamic variant of this problem, known as the Constructor--Blocker game: Two players take turns claiming unclaimed edges on a complete graph $K_n$. Constructor's aim is to maximize her copies of $H$ without ever forming $F$, while Blocker plays without any restrictions to minimize this number. When both players play optimally and Constructor goes first, the final count of $H$ in Constructor's graph is called the score of the game, denoted by $g(n, H, F)$.
Since Constructor's final graph is always  $F$-free, the number of copies of $H$ she builds is at most the maximum possible number in a static graph. This gives the immediate upper bound 
$$g(n, H, F) \le \ex(n, H, F).$$

The game is well understood when the forbidden graph $F$ is not bipartite. For instance, Balogh, Chen, and English \cite{BCE25} proved a general theorem for any forbidden graph with  chromatic number  at least three.

\begin{theorem}[Balogh, Chen, and English]
Let $F$ be a graph with $\chi(F)>r\ge 2$. 
Then the game score is $g(n, K_r, F)=\Theta(n^r)$.
\end{theorem}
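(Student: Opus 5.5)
The upper bound is immediate, since Constructor's graph has at most $\binom{n}{r}=O(n^r)$ copies of $K_r$. For the lower bound, I will give Constructor an explicit strategy that keeps her graph $r$-partite throughout. Because $\chi(F)>r$, an $r$-partite graph can never contain $F$, so the $F$-free constraint is automatically satisfied and need not be tracked. The task then reduces to showing that Constructor can build $\Omega(n^r)$ copies of $K_r$ inside a fixed complete $r$-partite host, while Blocker is free to claim any edges.

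Concretely, Constructor fixes a partition $V(K_n)=V_1\cup\dots\cup V_r$ with $|V_i|=\lfloor n/r\rfloor$ or $\lceil n/r\rceil$. She only ever claims edges between distinct parts; if no such edge is free, she claims an arbitrary free edge inside a part. The latter choice is harmless: adding an edge inside a part could in principle create $F$, so instead she simply passes, or equivalently we note that claiming extra edges never hurts Blocker's count. To stay fully rigorous, she should claim an edge inside a part only if that keeps her graph $F$-free, and otherwise pass.

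The counting uses the fact that Blocker claims at most half of the $\Theta(n^2)$ cross edges. So Constructor ends with a spanning subgraph $G$ of the complete $r$-partite graph $T=K(V_1,\dots,V_r)$ containing at least $e(T)/2 - 1$ edges of $T$. A greedy edge-for-edge strategy gives only a density guarantee, and density $1/2$ alone does not force $\Omega(n^r)$ cliques by Turán counting within an $r$-partite host. Indeed, $G$ could be bipartite-like between pairs, with each pair $(V_i,V_j)$ split by Blocker in a structured way so that triangles vanish. I therefore expect this step to be the main obstacle: a naive pairing argument is not enough.

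The fix is a structured strategy that guarantees density at least $1/2$ between every pair $(V_i,V_j)$ in a well-distributed sense. I will do this with degree-balancing. Whenever Blocker claims an edge $uv$ with $u\in V_i$, $v\in V_j$, Constructor responds inside the same pair and at the same vertex $u$: she claims a free edge $uw$ with $w\in V_j$ if one exists, or otherwise a free edge at $v$ into $V_i$. This ensures that each vertex $u$ ends with Constructor-degree at least roughly $|V_j|/2-1$ into every other part $V_j$. From minimum degree, cliques follow. For $r=2$ we get $\Omega(n^2)$ edges directly. For general $r$, I will instead use the strengthened supersaturation form: first pick $r$ subsets $U_i\subseteq V_i$ of linear size and use a pairing strategy on $U_i\times U_j$ aimed at forcing Constructor's graph to contain a complete $r$-partite graph with linear-size parts. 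That second step is not achievable against Blocker directly. So the cleaner route is a random-like argument: with per-vertex degree at least $(1/2-o(1))|V_j|$ into each part, a codegree-type counting (choose $v_1$, then $v_2$ among its neighbours, and so on) only yields cliques if the common neighbourhoods stay large. I will therefore instead have Constructor play the Maker–Breaker $K_r$-building game on the $r$-partite board in blocks. She partitions each $V_i$ into $\Theta(n)$ constant-size blocks, so that for each $r$-tuple of blocks she can apply a constant-size strategy securing one $K_r$. Since $\Theta(n^r)$ block $r$-tuples are not disjoint, this still needs an averaging step. The crucial lemma to establish is that Constructor can secure $\Omega(n^r)$ copies through a potential-function (Erdős–Selfridge-type) greedy strategy: at each move she claims the cross edge maximizing the number of still-alive potential $K_r$'s through it, weighted by the number of her edges already in them. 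Showing that this weighted potential stays $\Omega(n^r)$ is where the real work lies.
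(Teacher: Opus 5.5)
Your first step is the right one, and it is the only part of the argument the paper itself records. The paper quotes this theorem from Balogh--Chen--English and describes the vertex-partition idea without giving a proof: fix an equipartition $V_1\cup\cdots\cup V_r$ and let Constructor claim only edges of the complete $r$-partite graph $T$, so every move is legal because $\chi(F)>r$. The gap is that your proposal never proves that Constructor gets $\Omega(n^r)$ copies of $K_r$ inside $T$, which is the whole content of the theorem since the upper bound is trivial.

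You rightly note that density $1/2$ is not enough. In fact neither is minimum degree $1/2$ into every other part: for $r=3$, halve each $V_i=A_i\cup B_i$ and take all edges of $A_1\times A_2$, $B_1\times B_2$, $A_2\times A_3$, $B_2\times B_3$, $A_1\times B_3$, $B_1\times A_3$; this graph is triangle-free. So the degree-balancing route cannot be finished. The block idea also fails, because the $\Theta(n^r)$ block $r$-tuples share edges and their local games cannot be played in parallel. You end by naming a potential-function lemma as ``where the real work lies'' without proving it. Moreover, the weighting you describe (by the number of her edges already in a clique) is not one under which a Constructor gain balances a Blocker kill.

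The gap closes with the standard exponential-weight (Erd\H{o}s--Selfridge/Beck) potential. Let $\mathcal K$ be the set of $r$-cliques of $T$, and let $C$ and $B$ be the current edge sets of Constructor and Blocker. Call $K$ alive if $E(K)\cap B=\emptyset$. Set $\Phi=\sum_{K\text{ alive}}2^{|E(K)\cap C|}$, and for unclaimed $e\in E(T)$ set $\phi(e)=\sum_{K\ni e,\ K\text{ alive}}2^{|E(K)\cap C|}$.

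Constructor always claims the unclaimed edge $e$ of $T$ maximizing $\phi(e)$, which raises $\Phi$ by exactly $\phi(e)$. Blocker's reply $f$, if it lies in $T$, lowers $\Phi$ by $\phi(f)$ recomputed after $e$ was taken. This is at most $\phi(e)+2^{\binom{r}{2}}\,|\{K\in\mathcal K: e,f\in E(K)\}|$, and two distinct edges lie in $O(n^{r-3})$ common cliques of $T$ (none if $r=2$).

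Over at most $e(T)=O(n^2)$ rounds, $\Phi$ therefore loses $O(n^{r-1})$ in total, while initially $\Phi=|\mathcal K|=\Theta(n^r)$. Once $E(T)$ is exhausted, the alive cliques are exactly those owned entirely by Constructor, each with weight $2^{\binom{r}{2}}$. Hence she owns at least $2^{-\binom{r}{2}}\bigl(|\mathcal K|-O(n^{r-1})\bigr)=\Omega(n^r)$ copies of $K_r$. Combined with the trivial bound $\binom{n}{r}$, this gives $\Theta(n^r)$.
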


This result relies on a vertex-partition strategy. Constructor can  avoid a non-bipartite graph by dividing the vertices into several groups and only claiming edges between different groups. This allows her to safely build a dense graph. Using similar structural advantages, Balogh, Chen, and English \cite{BCE25} also solved the game when both the target and the forbidden graphs are odd cycles. However, this method fails when the forbidden graph is bipartite. Constructor can no longer use this partition trick, which forces her to play in a sparse environment. This fundamental change makes the game much harder, leaving large gaps between the known bounds. 

Because of this difficulty, recent progress on bipartite graphs has been strictly limited to acyclic structures. For instance, Boisson, Mogge, Parreau, and Pierron \cite{BMPP25} expanded the scope by forbidding short paths and stars, which are trees and structurally much simpler. They also studied a variation where Constructor must maintain a planar graph instead of avoiding a specific local subgraph. However, solving the game with arbitrary bipartite forbidden graphs, particularly even cycles, remains a challenging open problem.

\subsection{Counting cliques}

A natural target graph for Constructor is a clique $K_r$, particularly the triangle $K_3$. When exploring this target against a forbidden odd cycle, Balogh, Chen, and English \cite{BCE25} established the following bounds, where $\cC_{\le 2k} := \{C_4, C_6, \dots, C_{2k}\}$ denotes the family of even cycles up to length $2k$.

\begin{theorem}[Balogh, Chen and English]
Let $k \ge 2$. Then 
$$\Omega_k\bigl(\ex(n, \mathcal{C}_{\le 2k})\bigr) \le g(n, K_3, C_{2k+1}) \le O_k\bigl(\ex(n, C_{2k})\bigr).$$
\end{theorem}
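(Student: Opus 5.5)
We prove the two bounds separately: a counting argument for the upper bound and an explicit Constructor strategy for the lower bound.

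\textbf{Upper bound.} Constructor's final graph $G$ is $C_{2k+1}$-free, and we show it has $O_k(\ex(n,C_{2k}))$ triangles. The plan is to pass to an auxiliary graph that is $C_{2k}$-free and has at least a constant fraction of the triangles of $G$ as edges. Let $\mathcal T$ be the set of triangles of $G$.

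\emph{Step 1: a bounded number of triangles on most edges.} Suppose an edge $uv$ lies in many triangles $uvw_1,\dots,uvw_m$ with $m$ large in terms of $k$. Then one should be able to close a $(2k+1)$-cycle using the common neighbours $w_i$ together with other triangle edges. First, $u w_i v$ gives paths of length $2$ between $u$ and $v$. To reach odd length $2k+1$ we need longer $u$--$v$ paths inside triangles. I expect this step to need care, and I would instead aim for a cleaner statement.

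\emph{Step 1 (revised): an edge-disjoint triangle family.} Greedily select a family $\mathcal T'$ of pairwise edge-disjoint triangles with $|\mathcal T'|=\Omega_k(|\mathcal T|)$. Form the graph $G'$ consisting of one chosen edge from each triangle in $\mathcal T'$.

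\emph{Step 2: $G'$ is $C_{2k}$-free.} Suppose $G'$ contains a $2k$-cycle. Replace one of its edges $xy$ by the path $x z y$ through the third vertex $z$ of its triangle. This produces a closed walk of length $2k+1$ in $G$, and it is a genuine cycle provided $z$ avoids the cycle.

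\emph{Step 3: conclude.} It follows that $|\mathcal T|=O(|E(G')|)=O(\ex(n,C_{2k}))$.

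\textbf{Main obstacle for the upper bound.} The difficulty is controlling the multiplicity of triangles per edge and the requirement that $z$ avoids the cycle. I would first try a supersaturation or "Erd\H{o}s–Gallai-type" lemma: in a $C_{2k+1}$-free graph, the number of triangles through any edge is $O_k(1)$ after deleting a small exceptional set.

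\textbf{Lower bound.} Fix a $\cC_{\le 2k}$-free graph $H$ with $\Omega(n)$ vertices and $\ex(n,\cC_{\le 2k})$ edges. Constructor aims to build a blow-up in which each edge $xy$ of $H$ becomes a triangle $x y z_{xy}$ with a private new vertex $z_{xy}$. Any odd cycle in this graph either uses a private vertex, hence is a triangle, or shortcuts through triangles to give an even cycle of length at most $2k$ in $H$; the latter is impossible. So the target graph is $C_{2k+1}$-free.

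The strategy runs as follows. Constructor claims edges of $H$; whenever she completes $xy$, she tries to claim $xz, yz$ for a fresh $z$. Blocker can block only one edge per turn, so a standard pairing or potential argument should secure a constant fraction of the triangles. Constructor does this among $\Theta(n)$ reserve vertices, choosing $z$ adaptively among many candidates. This should yield $\Omega_k(\ex(n,\cC_{\le2k}))$ triangles.
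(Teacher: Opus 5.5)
The paper only quotes this theorem from Balogh, Chen and English and gives no proof of it, so your proposal has to be judged on its own. There are genuine gaps in both halves.

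\textbf{Upper bound.} Your Step 1 (revised) is false. Take the book formed by an edge $uv$ together with $m$ common neighbours $w_1,\dots,w_m$. Every cycle in it is a union of two internally disjoint $u$--$v$ paths of length $1$ or $2$, so it has only cycles of length $3$ and $4$. Hence it is $C_{2k+1}$-free for every $k\ge 2$. Yet all $m$ of its triangles contain $uv$, so any edge-disjoint subfamily has size $1$, not $\Omega_k(|\mathcal T|)$.

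Your fallback lemma fails on the same example. The only way to make triangle multiplicities bounded there is to delete $uv$, which destroys every triangle you are trying to count. Heavy edges must be kept, not discarded. In fact they are the harmless case of your Step 2: an edge lying in at least $2k-1$ triangles always has an apex outside any given $2k$-cycle through it.

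The standard route is $g(n,K_3,C_{2k+1})\le \ex(n,K_3,C_{2k+1})$ combined with the static theorem of Gy\H{o}ri and Li, $\ex(n,K_3,C_{2k+1})=O_k(\ex(n,C_{2k}))$. This mirrors how the paper obtains its own upper bound for $C_{2k}$ from Gishboliner--Shapira.

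\textbf{Lower bound.} The strategy fails for two independent reasons.
\begin{enumerate}
\item \emph{Vertex count.} A private apex $z_{xy}$ for each edge of $H$ needs $e(H)$ extra vertices. But $e(H)\asymp \ex(n,\cC_{\le 2k})$ is superlinear, so at most $n$ such triangles can exist at all.
\item \emph{Game play.} Private apexes make the target triangles pairwise edge-disjoint. Each Constructor move then touches exactly one target, and Blocker, answering inside that same triangle, kills every one of them. The pairing argument works for Blocker, not for Constructor.
\end{enumerate}
Separately, $H$ must be taken bipartite. A $\cC_{\le 2k}$-free graph may contain $C_{2k+1}$, or a $C_{2k-1}$ which, with two apex detours, becomes a $C_{2k+1}$. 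Your claim that ``shortcutting gives an even cycle'' silently assumes bipartiteness.

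The missing idea is to share apexes, one per vertex of $X$, via the duplication graph $\Gamma(B)$ of a bipartite $B=(X,Y)$ with no cycle of length at most $2k$.
\begin{itemize}
\item Every odd cycle of $\Gamma(B)$ uses a vertical edge $xx'$, since the rest of $\Gamma(B)$ is bipartite.
\item So a $(2k+1)$-cycle would project onto at most $2k$ edges of $B$. Such a projection contains no cycle of $B$, so it is a forest.
\item Lemma~\ref{lem:forest-duplication} then forces the cycle to have length $3$ or $4$, a contradiction. Thus $\Gamma(B)$ is $C_{2k+1}$-free.
\end{itemize}
Constructor can now run the three-phase strategy from the proof of Proposition~\ref{prop:lower-k-3} essentially verbatim. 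She attaches two fresh leaves $a_x,b_x$ to each $x$ and claims half of $E(B)$. She then picks the less-attacked leaf $\ell_x$ and closes triangles $x\ell_x y$ from a pool of at least $e(B)/2$ unclaimed edges $\ell_x y$, of which Blocker can take only half. This uses $O(n)$ vertices and yields $\Omega(e(B))=\Omega_k(\ex(n,\cC_{\le 2k}))$ triangles.
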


In the same work, they investigated the game forbidding $C_4$, proving an upper bound of $O(n^{3/2})$ alongside a lower bound of $\Omega(n^{3/2} e^{-c\sqrt{\log n}})$. Believing the upper bound to be tight, they conjectured that $g(n, K_3, C_4) = \Theta(n^{3/2})$.

In this paper, we confirm this conjecture by developing an algebraic strategy to provide the missing lower bound. Furthermore, we investigate the game for all even cycles, leading to the discovery of a connection between the game score and classical extremal numbers. This general relationship is captured in our main result, which establishes a sandwich bound for the game score.

\begin{theorem} \label{thm:cliques}
Let $k \ge 2$. Then
$$\Omega_k\bigl(\ex(n,\cC_{\le 2k})\bigr) \le g(n,K_3,C_{2k}) \le O_k\bigl(\ex(n,C_{2k})\bigr).$$
\end{theorem}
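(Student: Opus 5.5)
Theorem~\ref{thm:cliques} has two parts: an upper bound through the static extremal number, and a lower bound through an explicit Constructor strategy.

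\textbf{Upper bound.} Constructor's final graph $G$ is $C_{2k}$-free, so $g(n,K_3,C_{2k})\le \ex(n,K_3,C_{2k})$. We therefore need $\ex(n,K_3,C_{2k})=O_k(\ex(n,C_{2k}))$, i.e.\ a $C_{2k}$-free graph has $O_k(e(G))$ triangles. The plan is a Gy\H{o}ri--Li type argument (triangles versus $C_{2k+1}$), adapted to even cycles. Consider the edges lying in many triangles, that is, edges $uv$ with codegree $d(u,v)\ge 2k$. Any such edge together with its common neighbourhood forms a book $B_{2k}$, and chains of such books produce a $C_{2k}$ readily. One shows that the subgraph of heavy edges has bounded degeneracy in a suitable sense. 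Then every triangle is charged to one of its edges of codegree below $2k$, or the heavy-edge triangles are counted via a degeneracy bound, giving at most $O_k(e(G))$ triangles. Blocker's moves are irrelevant here.

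\textbf{Lower bound.} Constructor should build, online, a graph of the form ``duplicate every vertex of a sparse graph of high girth.'' Fix a graph $H$ on about $n/2$ vertices with no cycles of length at most $2k$ and $e(H)=\Omega(\ex(n,\cC_{\le 2k}))$. We may assume $H$ has maximum degree $O(\bar d)$ by standard regularisation, where $\bar d$ is the average degree of $H$. Replace each vertex $x$ of $H$ by a pair $\{x,x'\}$ of vertices of $K_n$. Constructor aims to claim the pair edge $xx'$ and, for each $xy\in E(H)$, a subset of the four edges between $\{x,x'\}$ and $\{y,y'\}$. Every triangle $x x' y$ with $xx'$ claimed and both $xy,x'y$ claimed contributes.

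\textbf{Game strategy.} Each pair $\{x,x'\}$ is treated as a local board, and Constructor uses a pairing strategy. When Blocker claims an edge inside one gadget (say $xy$), Constructor replies in the same gadget, and a constant fraction of gadgets still end up with at least one triangle, each gadget having only $O(1)$ edges. Pair edges $xx'$ can be secured at the start: Constructor claims them first, and if Blocker steals some, the affected vertex can be discarded. We lose only a constant fraction as long as Constructor picks an alternative duplicate partner from an unused reserve of vertices. The resulting count is $\Omega(e(H))=\Omega_k(\ex(n,\cC_{\le 2k}))$ triangles.

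\textbf{$C_{2k}$-freeness.} Constructor's graph is a subgraph of the blow-up $H[K_2]$. A cycle in the blow-up projects to a closed walk in $H$. Since $H$ has girth greater than $2k$, a closed walk of length at most $2k$ cannot contain a cycle of $H$, so it stays inside a tree of $H$. Here the abstract warns that duplication creates $C_4$'s: $x\,y\,x'\,y'$ is a $4$-cycle whenever all four cross edges are taken. So in each gadget Constructor takes only a star-like configuration, e.g.\ only $xy$ and $x'y$ from the pair to $y$. A $2k$-cycle would then need to traverse tree-like projections using both duplicates. For $k\ge3$ the only short cycles are of length $3$ and $4$ within two adjacent gadgets, and $C_4$ is allowed. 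I expect this check to be the main obstacle: with $C_4$'s allowed but $C_{2k}$ forbidden, one must verify that no $C_{2k}$ arises from concatenating several gadgets along a path of $H$. The first thing to try is restricting each $H$-edge to the edges $xy,x'y$ in one orientation (orient $H$ acyclically). Then cycles in the blow-up are forced to reuse the same projected edge only in controlled ways, and their length can be shown to be at most $4$ unless they wrap around a cycle of $H$. For $k=2$ this fails, and the separate finite-field construction is needed. Since the present theorem is for general $k$, including $k=2$, where $\ex(n,\cC_{\le4})=\ex(n,C_4)$, that case would be handled by the algebraic strategy developed for $g(n,K_3,C_4)$.
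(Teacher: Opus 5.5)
Your first step for the upper bound matches the paper, which then simply cites Gishboliner--Shapira for $\ex(n,K_3,C_{2k})=O_k(\ex(n,C_{2k}))$. Your Gy\H{o}ri--Li sketch is not a proof as it stands: an edge with $2k$ common neighbours spans no cycle longer than $4$, and the claimed degeneracy of heavy edges is the whole difficulty. You should cite the result instead.

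The real gaps are in the lower bound. For $k\ge 3$ your game strategy fails. Under the fixed orientation that your $C_{2k}$-freeness argument uses, the only triangles are $xx'y$ with $x\to y$. Each needs both edges of the $2$-set $\{xy,x'y\}$, and these $2$-sets are pairwise disjoint. So Blocker answers Constructor's first edge in each set by taking the other, and she completes no triangle. Replying ``in the same gadget'' cannot revive a gadget Blocker has already hit.

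The missing idea is to keep the clone undetermined until Blocker has committed. In the paper, Constructor works with a bipartite high-girth $B=(X,Y)$ and proceeds as follows.
\begin{enumerate}
\item For each $x\in X$ she claims two pendant edges $xa_x,xb_x$.
\item Only then does she place $Y$ on fresh vertices, so no edge $a_xy$ or $b_xy$ has yet been touched.
\item She claims edges of $B$ until $B$ is exhausted; let $C_B$ be her share.
\item Only afterwards does she declare as clone $\ell_x$ whichever of $a_x,b_x$ has fewer Blocker edges to $\{y:xy\in C_B\}$.
\end{enumerate}
Blocker made at most $2|C_B|-e(B)$ moves outside $B$ in that phase. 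By the choice of $\ell_x$, at most half of them landed on edges $\ell_xy$. Hence at least $e(B)/2$ closing edges $\ell_xy$ with $xy\in C_B$ are still free, and Constructor takes half of them. Legality is the one-sided duplication lemma.

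Your framework can also be repaired. Allow in $\{x,x'\}\times\{y,y'\}$ any of the four cherries with centre in one pair and leaves the other pair, and always answer in the gadget Blocker has just entered. This wins all but $O(n)$ of these local games. But then you must prove $C_{2k}$-freeness for arbitrary mixtures of such cherries, which your fixed-orientation argument does not cover.

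For $k=2$ you give no argument, and this is the main new content of the theorem. One needs a $C_4$-free bipartite skeleton together with a supply of point--point closing edges that can be added without creating a $C_4$. The paper takes points $T\times\Fq$ and lines $y=mx+b$ with $m\in S$, where $S\cap T=\emptyset$. Constructor claims half of the incidences, which yields $\Omega(q^4)$ cherries $pLp'$. She closes them with edges of $F_\theta=\{pp' : y+y'=xx'+\theta\}$.

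All $F_\theta$-neighbours of $(x_0,y_0)$ lie on a line of slope $x_0\in T$, while lines of $\cL$ have slopes in $S$. Since $F_\theta$ is itself $C_4$-free, this makes $B\cup F_\theta$ $C_4$-free. Blocker's $O(q^3)$ point--point edges spoil only $O(q^3)$ cherries. Pigeonholing over the $q$ values of $\theta$ then gives a class whose free edges close $\Omega(q^3)$ cherries. Taking the heaviest available such edge each turn secures half of this weight, i.e.\ $\Omega(n^{3/2})$ triangles.
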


By bridging the game setting with traditional extremal graph theory, this theorem immediately determines the exact order of the game score whenever the lower and upper bounds share the same polynomial growth rate. It is well known that, for \(k\in\{2,3,5\}\), the extremal numbers for
\(\mathcal C_{\le 2k}\) and for the single cycle \(C_{2k}\) have the same
order of magnitude, by results of Bondy--Simonovits, Erdős--Rényi--Sós,
Benson, and Wenger~\cite{BS,
ERS,B,W}. Hence we determine the cases $C_4$, $C_6$ and $C_{10}$.

\begin{corollary}
For $k\in\{2,3,5\}$, we have $g(n,K_3,C_{2k})=\Theta(n^{1+1/k}).$
\end{corollary}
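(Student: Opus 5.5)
This corollary follows by combining Theorem~\ref{thm:cliques} with the classical extremal bounds, so the plan has two parts: an upper bound and a lower bound.

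\textbf{Upper bound.} Theorem~\ref{thm:cliques} gives $g(n,K_3,C_{2k})\le O_k(\ex(n,C_{2k}))$. The Bondy--Simonovits even cycle theorem \cite{BS} gives $\ex(n,C_{2k})=O_k(n^{1+1/k})$ for every $k\ge 2$. Together these yield
\[
g(n,K_3,C_{2k})=O(n^{1+1/k}).
\]

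\textbf{Lower bound.} Theorem~\ref{thm:cliques} gives $g(n,K_3,C_{2k})\ge \Omega_k(\ex(n,\cC_{\le 2k}))$. So it suffices to show $\ex(n,\cC_{\le 2k})=\Omega(n^{1+1/k})$ for $k\in\{2,3,5\}$. In each case we use the known algebraic construction, which is a bipartite graph of girth at least $2k+2$ with $\Theta(q^{k})$ vertices and $\Theta(q^{k+1})$ edges for every prime power $q$.

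\begin{itemize}
\item For $k=2$, take the incidence graph of the projective plane of order $q$, as in Erd\H{o}s--R\'enyi--S\'os \cite{ERS}. Its girth is $6$.
\item For $k=3$, take the incidence graph of a generalized quadrangle, following Benson \cite{B} and Wenger \cite{W}. Its girth is $8$.
\item For $k=5$, take the incidence graph of a generalized hexagon \cite{B}. Its girth is $12$.
\end{itemize}

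Each such graph has $N=\Theta(q^k)$ vertices and $\Theta(N^{1+1/k})$ edges. Being bipartite with girth at least $2k+2$, it contains no cycle of length at most $2k$, so in particular it is $\cC_{\le 2k}$-free.

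\textbf{Passing to all $n$.} The constructions only exist when $q$ is a prime power, so we interpolate. Given $n$, Bertrand's postulate provides a prime $q$ with $N(q)\le n$ and $N(q)=\Theta(n)$. Place the construction on $N(q)$ of the vertices and leave the remaining vertices isolated. This gives
\[
\ex(n,\cC_{\le 2k})=\Omega(n^{1+1/k}).
\]

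The only step needing care is this interpolation to arbitrary $n$, and it is routine because $N(q)$ is polynomial in $q$. Combining the two bounds proves the corollary.
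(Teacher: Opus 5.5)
Your proof is correct and takes the same route as the paper, which derives the corollary directly from Theorem~\ref{thm:cliques} by combining the Bondy--Simonovits bound $\ex(n,C_{2k})=O(n^{1+1/k})$ with the classical algebraic constructions of Erd\H{o}s--R\'enyi--S\'os, Benson and Wenger giving $\ex(n,\cC_{\le 2k})=\Omega(n^{1+1/k})$ for $k\in\{2,3,5\}$. Your explicit step extending from prime powers $q$ to arbitrary $n$ via Bertrand's postulate is correct; it does the same job as Lemma~\ref{lem:scaling} elsewhere in the paper.
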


The lower bounds in Theorem~\ref{thm:cliques} reveal a clear structural difference between the $C_4$-free game ($k=2$) and the games forbidding longer $C_{2k}$ ($k\ge 3$). Since Constructor must avoid the forbidden cycle, her strategy depends directly on its length. For $k \ge 3$, the cycle is long enough to allow shorter local cycles. We use this by starting with a high-girth bipartite graph and duplicating vertices on one side, which immediately forms many triangles. This duplication only introduces new cycles of length 3 or 4. These short cycles do not violate the $C_{2k}$ restriction, and the underlying high-girth prevents them from forming a larger $C_{2k}$.
However, this vertex duplication method naturally generates $C_4$ subgraphs, so it fails for the $C_4$-free game. To build triangles without creating $C_4$, the edges must be placed much more carefully. We solve this by applying finite field geometry to construct the required triangles. Because this case requires a  different mechanism, we analyze $g(n, K_3, C_4)$ separately from the cases where $k \ge 3$.

While we have a complete picture for these specific cases, the behaviour for larger cliques or longer cycles remains open. For larger cliques with $r\ge 2k$, the game score is trivially zero. Specifically, 
we also prove that $g(n,K_{2k-1},C_{2k})=0$; the proof is presented in the Appendix. However, for general cycles $C_{2k}$ where $k \ge 4$, we cannot determine the correct order for $g(n, K_r, C_{2k})$, even for $r=3$.

\subsection{Counting cycles}

A natural extension of the game is to consider the case where both the target graph and the forbidden graph are cycles. In classical extremal graph theory, the generalized Tur\'an number $\ex(n, C_t, C_k)$ is a heavily studied topic. For the dynamic game, Balogh, Chen, and English \cite{BCE25} resolved the case where both cycles are odd, proving that the game score matches the static extremal number.

\begin{theorem}[Balogh, Chen, and English]
Let $k > \ell \ge 2$. Then $g(n, C_{2\ell+1}, C_{2k+1}) = \Theta(n^\ell)$.
\end{theorem}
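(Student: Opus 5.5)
The statement has two sides. The upper bound comes from the static problem. The lower bound comes from an explicit strategy for Constructor that builds a partial blow-up of $C_{2l+1}$ which contains no odd cycle longer than $2l+1$.

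\emph{Upper bound.} Constructor's final graph is $C_{2k+1}$-free, so $g(n,C_{2l+1},C_{2k+1})\le \ex(n,C_{2l+1},C_{2k+1})$. For $k>l\ge 2$ I would invoke the known generalized Tur\'an bound $\ex(n,C_{2l+1},C_{2k+1})=O(n^l)$ (Gishboliner--Shapira). If a self-contained argument were wanted, I would count copies of $C_{2l+1}$ via their $l$ pairwise non-adjacent "middle" vertices and use the sparsity of $C_{2k+1}$-free graphs between them. I would rely on the citation.

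\emph{The target static structure.} Fix $l+1$ vertices $x_0,\dots,x_l$ and disjoint sets $A_1,\dots,A_l$. Let $G$ have the edge $x_lx_0$ together with all edges $x_{i-1}a$ and $ax_i$ for $a\in A_i$. The cycle $x_0A_1x_1A_2\cdots A_lx_lx_0$ has length $2l+1$, so $G$ contains $\prod_i|A_i|$ copies of $C_{2l+1}$. I claim every subgraph of $G$ is $C_{2k+1}$-free.
\begin{itemize}
\item Each vertex of $A_i$ has degree $2$, with both neighbours among the $x_j$.
\item Hence along any cycle the $A$-vertices are pairwise non-consecutive, and every $A$-vertex lies between two distinct $x$'s.
\item So a cycle has at most $l+1$ vertices among the $x_j$ and at most $l+1$ vertices in $\bigcup A_i$.
\item A cycle of length $2l+2$ must alternate perfectly and so cannot use the edge $x_lx_0$; it is then a cycle in the bipartite graph between $\{x_j\}$ and $\bigcup A_i$, hence even.
\item Therefore every odd cycle has length at most $2l+1<2k+1$.
\end{itemize}

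\emph{The game strategy.}
\begin{itemize}
\item Constructor moves first. She claims $x_lx_0$ for an arbitrary pair, fixes $x_1,\dots,x_{l-1}$ among the remaining vertices, and splits the other $n-l-1$ vertices into $l$ groups $V_1,\dots,V_l$ of linear size.
\item She then works in rounds. In a round she picks a fresh vertex $v\in V_i$ (cycling through $i$) such that neither $x_{i-1}v$ nor $vx_i$ has been claimed by Blocker, and claims $x_{i-1}v$.
\item If on the next turn $vx_i$ is still free, she claims it, and $v$ joins $A_i$. If Blocker has taken it, she abandons $v$.
\end{itemize}

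Constructor's graph is always a subgraph of a $G$ as above (abandoned vertices are pendant), so it stays $C_{2k+1}$-free throughout. Each Blocker edge touches at most one edge from $\{x_j\}$ to a given vertex, so it can spoil at most one candidate vertex, either in advance or by answering an opened one. Constructor spends at most two moves per candidate, so Blocker's moves spoil at most a constant fraction of each $V_i$. Edges Blocker plays elsewhere are irrelevant.

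The main obstacle is carrying out this accounting carefully, so that each $A_i$ ends with size $\Omega(n/l)$ simultaneously. I would handle it by charging each Blocker move to at most one spoiled vertex and noting that at most $n/2$ Blocker moves are relevant. This yields $\Omega_l(n^l)$ copies of $C_{2l+1}$, matching the upper bound.
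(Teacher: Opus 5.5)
Your upper bound (by citation) and your static analysis of the template are fine. In fact your observation that every cycle has length at most $2\ell+2$ already suffices, since $k>\ell$ gives $2k+1\ge 2\ell+3$. The paper does not prove this statement itself; it quotes it from Balogh, Chen and English. Its proof of Proposition~\ref{prop:cycles-k-3} for odd $t$ uses exactly your template $\mathcal H_{2\ell+1}$, but plays it differently.

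The gap is in your game strategy, and it is fatal. You fix all core vertices $x_0,\dots,x_\ell$ at the start, so every candidate $v\in V_i$ needs two edges $x_{i-1}v$ and $vx_i$ whose endpoints Blocker already knows. Blocker simply answers each move $x_{i-1}v$ by claiming $vx_i$, which is free by your selection rule. Then every candidate is abandoned and every $A_i$ stays empty. Constructor ends with a star forest plus the edge $x_\ell x_0$, which contains no $C_{2\ell+1}$ at all.

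Your accounting breaks exactly here. A failed candidate costs Constructor one move and Blocker one move, so the spoiled fraction is not bounded away from $1$. The fact that each Blocker edge spoils at most one candidate is true but useless, because Blocker has one move available per candidate. This is not a matter of doing the bookkeeping more carefully; the claimed bound is false for this strategy.

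The missing idea is to delay the choice of the second endpoint of each block. As in Claim~\ref{claim:build-block}, Constructor first claims edges from an existing core vertex $u_{i-1}$ to $4s$ fresh vertices, forming a set $A_i$. Only then does she select the next core vertex $u_i$ as a fresh vertex. No edge at $u_i$ has been claimed, so all edges from $u_i$ to $A_i$ are free, and Blocker can destroy at most one of them per round. Hence she obtains $s$ of them.

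She builds the blocks sequentially in this way. For the last block, she claims $u_\ell u_0$ immediately after choosing the fresh vertex $u_\ell$, before connecting $u_\ell$ to $A_\ell$. This yields $s^\ell=\Omega_\ell(n^\ell)$ copies of $C_{2\ell+1}$. Throughout, her graph is a subgraph of the template, hence $C_{2k+1}$-free, so every move is legal.
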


In this paper, we focus on forbidden even cycles. The extremal number $\ex(n, C_t, C_{2k})$ is fully understood. Gerbner, Gy\H{o}ri, Methuku, and Vizer \cite{GGMV20} analyzed the case where the target cycle is also even, while Gishboliner and Shapira \cite{GS20} completely settled the problem for all target cycles.  We prove that Constructor can always achieve the same order as $\ex(n, C_t, C_{2k})$  in the game, determining the score for all target cycles of length at least $4$.

\begin{theorem} \label{thm:cycles}
Let $k \ge 2$, and let $t \ge 4$ be an integer with $t \neq 2k$. Then the game score satisfies
\[
g(n,C_t,C_{2k})
=
\begin{cases}
\Theta_{t,k}\bigl(n^{t/2}\bigr), & \text{if } k=2,\\[2mm]
\Theta_{t,k}\bigl(n^{\lfloor t/2\rfloor}\bigr), & \text{if } k\ge 3.
\end{cases}
\]
\end{theorem}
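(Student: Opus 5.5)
Upper bounds come from $g(n,C_t,C_{2k})\le \ex(n,C_t,C_{2k})$ and the known orders: by Gerbner--Győri--Methuku--Vizer and Gishboliner--Shapira, $\ex(n,C_t,C_4)=\Theta(n^{t/2})$ and $\ex(n,C_t,C_{2k})=\Theta(n^{\lfloor t/2\rfloor})$ for $k\ge3$, $t\ne 2k$. So the work is to give Constructor a strategy that reaches these orders.

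\textbf{Case $k\ge3$.} I would mimic the extremal construction: a ``blow-up of a short structure'' whose cycles are controlled. For $t=2\ell$ even, the extremal graphs are essentially $\ell$ (or a bounded number of) high-degree hub vertices joined to many low-degree vertices, i.e. a theta-like graph: take $\ell$ hubs $h_1,\dots,h_\ell$ arranged cyclically, and for each consecutive pair $(h_i,h_{i+1})$ a set $P_i$ of $\Theta(n)$ paths of length $2$. Choosing one midpoint per pair gives $\Theta(n^{\ell})$ copies of $C_{2\ell}$. The hub-based graph must be $C_{2k}$-free, so I would restrict to a construction where every cycle passes through the hubs in a controlled way; with few hubs and disjoint midpoint sets, every cycle uses hubs and has length $2m$ for $m$ hubs used, or $4$ (two midpoints of the same pair). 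To avoid $C_{2k}$ when $t<2k$ use $\ell<k$ hubs in a cycle; when $t>2k$ I would instead attach pendant structures so long cycles use length-$3$ detours (e.g. triangles $h_i,x,y$ with $x,y$ both adjacent) to realize odd/even lengths different from $2k$; odd $t$ adds one extra edge or triangle, giving $n^{\lfloor t/2\rfloor}$. In the game, Constructor fixes hubs and greedily claims edges $h_ix$ and $xh_{i+1}$: she picks a fresh vertex $x$ and claims $h_ix$; Blocker can block only one edge per turn, so a pairing/threat strategy (claim $h_ix$, then if Blocker takes $xh_{i+1}$, abandon $x$, otherwise complete) yields a linear fraction of completed midpoints in each $P_i$. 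Since Constructor only ever claims edges of the planned template, her graph stays $C_{2k}$-free.

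\textbf{Case $k=2$.} Here the target order $n^{t/2}$ requires a dense $C_4$-free graph (degree $\sqrt n$), so hubs fail. I would use the lower bound for $g(n,K_3,C_4)$ machinery, or more simply a $C_4$-free polarity-type graph built in the game: Constructor plays within a fixed $C_4$-free host graph $G$ with $\Theta(n^{3/2})$ edges and strong pseudorandomness (e.g. the Erdős–Rényi polarity graph, which has $\Theta(n^{t/2})$ copies of $C_t$ robustly), claiming only edges of $G$. Blocker steals at most half of the edges; the key step is to show Constructor can ensure the resulting subgraph still has $\Omega(n^{t/2})$ copies of $C_t$, e.g. by playing on a random subfamily/degree-balanced strategy so that her graph is a subgraph with linear-in-degree minimum degree in an expander-like structure, and then counting $C_t$ via eigenvalue bounds or by counting paths of length $\lfloor t/2\rfloor$ and closing them. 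For even $t$, counting homomorphic copies via $\operatorname{tr}(A^t)\ge (2e/n)^t\cdot$ bound and subtracting degenerate ones works once degrees are $\Omega(\sqrt n)$ and the graph is $C_4$-free; for odd $t$ an analogous argument needs some spectral control.

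The main obstacle is the $k=2$ case: guaranteeing that Blocker cannot destroy the cycle count in the sparse $C_4$-free setting. I would first try to have Constructor maintain a balanced degree (each vertex gets $\Omega(\sqrt n)$ of its host-edges via a local pairing strategy on each star) and then invoke the $C_4$-free counting of walks (for even $t$, homomorphism counts of $C_t$ are at least $n\,d^{t}$ while degenerate walks are $O(n d^{t-1}\cdot)$), handling odd $t$ by the finite-field structure used for triangles.
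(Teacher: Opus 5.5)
Your upper bound is the same as the paper's, but both lower-bound arguments have genuine gaps.

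\textbf{The case $k\ge 3$.} The template is essentially right. For $t=2r$ it is the cyclic blow-up on $r$ hubs. For $t=2r+1$ it is $r$ blocks along hubs $u_0,\dots,u_r$, closed by the single edge $u_ru_0$.

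Your way of claiming it fails, however. Because the hubs are fixed in advance, Blocker can answer each claim of $h_ix$ by taking $xh_{i+1}$. Your threat is a single threat, met by a single move, so no vertex ever becomes a common neighbour of $h_i$ and $h_{i+1}$. Letting $x$ serve either $P_{i-1}$ or $P_i$ does not help either: Blocker pairs $h_1x$ with $h_2x$ for every $x$, so $P_1$ stays empty, and every target cycle uses $P_1$.

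The missing idea is freshness. First claim a star from $u_{i-1}$ to $4s$ fresh vertices. Only then pick the next hub $u_i$ as a fresh vertex. At that moment all $4s$ edges from $u_i$ to the star are unclaimed at once, and Blocker, destroying one per round, cannot stop you from getting $s$ of them.

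The block that closes the cycle has both endpoints already placed, so it needs the paper's extra care:
\begin{itemize}
\item for $t=2r$, build stars from both $u_{r-2}$ and $u_0$ before choosing the fresh $u_{r-1}$;
\item for $t=2r+1$, build the star from $u_{r-1}$, pick a fresh $u_r$, and claim $u_ru_0$ immediately.
\end{itemize}

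Separately, your worry about $t>2k$ is unfounded, and the triangle detours are unnecessary (and unverified). A simple cycle in the blow-up visits the hubs along a closed walk in the hub cycle with no repeated hub. So it uses either two consecutive hubs (length $4$) or all of them (length $t$). Only lengths $4$ and $t$ occur, for every $t\neq 2k$.

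\textbf{The case $k=2$, even $t=2r\ge 6$.} This part is only a sketch, but the trace idea is a valid alternative to the paper's use of Simonovits' supersaturation, and it needs no degree balancing. Claiming host edges only, Constructor has $e=\Omega(n^{3/2})$ edges and inherits $\Delta=O(\sqrt n)$ and $C_4$-freeness. Hence $\operatorname{tr}(A^t)\ge (2e/n)^t=\Omega(n^{t/2})$.

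Your error terms are wrong, though. The bound $nd^t$ exceeds the total number of closed walks, and a degenerate bound $O(nd^{t-1})$ would swamp the main term. The correct estimate splits the non-injective closed walks in two:
\begin{itemize}
\item those whose image is a tree have at most $t/2+1$ vertices, giving $O(n\Delta^{t/2})$ walks;
\item those whose image contains a cycle lose a factor $\Delta$, since two vertices have at most one common neighbour, giving $O(n\Delta^{t-3})$ walks.
\end{itemize}
Both counts are $o(n^{t/2})$.

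\textbf{The case $k=2$, odd $t=2r+1$.} Here there is no argument. Density and $\lambda_1$ do not force odd cycles (for bipartite graphs $\operatorname{tr}(A^t)=0$ when $t$ is odd). You give no mechanism forcing odd cycles into Constructor's graph. Closing paths of length $\lfloor t/2\rfloor$ gives the wrong cycle length.

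What is needed is the paper's closing argument, which you only point to:
\begin{itemize}
\item Take half of the bipartite incidence graph $B$. After deleting low-degree vertices, it contains $\Omega(q^{2r+2})$ point-to-point paths $p_0L_1p_1\cdots L_rp_r$ of length $t-1$.
\item Each such path is closed by the edge $p_0p_r$ of $F_\theta$ with $\theta=y_0+y_r-x_0x_r$.
\item $B\cup F_\theta$ is $C_4$-free: the $F_\theta$-neighbours of $(x_0,y_0)$ lie on a line of slope $x_0\in T$, while lines of $B$ have slopes in $S$.
\item Two points share at most one line, so a fixed pair $p_0,p_r$ ends only $O(q^{2r-2})$ paths. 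Blocker's $O(q^3)$ point--point edges therefore ruin only $O(q^{2r+1})$ paths.
\item Pigeonholing over the $q$ values of $\theta$ gives a $\theta$ with $\Omega(q^{2r+1})$ closable paths.
\item Claiming $F_\theta$-edges greedily by weight secures half of them, i.e.\ $\Omega(n^{t/2})$ copies of $C_t$.
\end{itemize}
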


Observe that if $t=2k$, then trivially $g(n,C_t,C_{2k})=0$.
The upper bound in this theorem follows immediately from the  results in \cite{GGMV20, GS20}. The primary contribution of our work lies in the lower bounds, which require two  different strategies depending on the length of the forbidden cycle. 

For longer even cycles where $k \ge 3$, we design a structural template graph. This template acts as a partial blow-up of the target cycle. We secure a small set of core vertices and build linear-size common neighbourhoods between consecutive core vertices. By carefully claiming edges in bulk, Constructor forces copies of $C_t$ by selecting one vertex from each neighbourhood. We prove that any subgraph of this template only contains cycles of length 4 or exactly $t$. Since we assume $t \neq 2k$ and $k \ge 3$, this guarantees that every move Constructor makes is safe from $C_{2k}$.

However, because this template method essentially relies on local $4$-cycles, it is not applicable when the forbidden graph is $C_4$. To resolve the exceptional $k=2$ case, we extend the finite field algebraic construction developed for triangles. By analyzing paths within a $C_4$-free incidence geometry and applying the pigeonhole principle, we show that Constructor can safely close enough paths to achieve the required bound for every target cycle covered by the theorem.

\subsection{Organization}

The remainder of this paper is organized as follows. In Section~\ref{sec:triangles}, we prove Theorem~\ref{thm:cliques} for counting triangles. We split the proof based on the forbidden cycle length. Subsection~\ref{subsec:triangles-k3} introduces the vertex duplication strategy for $k \ge 3$, and Subsection~\ref{subsec:triangles-k2} presents the algebraic incidence geometry approach for the $4$-cycle case. In Section~\ref{sec:cycles}, we apply these foundational concepts to prove Theorem~\ref{thm:cycles} for counting general cycles. 
This is handled by utilizing a structural template graph in Subsection~\ref{subsec:cycles-k3} and extending the finite field construction in Subsection~\ref{subsec:cycles-k2}. In Section~\ref{sec:concluding}, we provide concluding remarks and state an open problem. Finally, Appendix~\ref{app:additional} contains the proof that the game score for large cliques against even cycles is exactly zero.

\medskip

\section{Counting triangles} \label{sec:triangles}

In this section, we prove Theorem \ref{thm:cliques}. The upper bound is a direct consequence of the static extremal number. Since Constructor's final graph is always  $C_{2k}$-free, the number of triangles she can build is at most $\ex(n, K_3, C_{2k})$. By the result of Gishboliner and Shapira~\cite{GS20},
\[
\ex(n, K_3, C_{2k}) \le O_k(\ex(n, C_{2k})).
\]
Therefore, the upper bound naturally holds.

The main challenge lies in the lower bounds. We split the proof into two subsections based on the length of the forbidden cycle. For $k \ge 3$, we develop a vertex duplication strategy. This strategy safely generates many triangles by copying vertices in a bipartite graph. To solve the $k=2$ case, we provide a different algebraic construction in the second subsection.

\medskip

\subsection{The lower bound for longer even cycles ($k \ge 3$)} \label{subsec:triangles-k3}

The goal of this subsection is to establish the lower bound for all $k \ge 3$.

\begin{proposition} \label{prop:lower-k-3}
For any $k \ge 3$, the game score satisfies 
\[
g(n, K_3, C_{2k}) \ge \Omega_k(\ex(n, \cC_{\le 2k})).
\]
\end{proposition}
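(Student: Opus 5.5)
The plan is to realise, inside the game, a vertex-duplication of a high-girth bipartite graph, and to control Blocker by a pairing strategy. Let $m=\ex(n/4,\cC_{\le 2k})$. Standard reductions give a $\cC_{\le 2k}$-free graph on about $n/4$ vertices with $\Omega(m)$ edges, minimum degree $\Omega(m/n)$, and (taking a largest bipartite subgraph) bipartite with parts $A,B$. Call this graph $G_0$; its girth exceeds $2k$. Fix a disjoint set $A'$ of vertices and a bijection $a\mapsto a'$ from $A$ to $A'$. The template graph $T$ has edges $ab$ and $a'b$ for every $ab\in E(G_0)$, together with the matching edges $aa'$. Each edge $ab$ of $G_0$ then yields the triangle $a a' b$.

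The first step is to show that every subgraph of $T$ is $C_{2k}$-free. Consider the projection $T\to G_0$ that sends $a'\mapsto a$. A cycle in $T$ maps to a closed walk in $G_0$, possibly with some vertices stalled along matching edges. Since $G_0$ has girth $>2k$, a closed walk of length at most $2k$ in $G_0$ is tree-like. One checks that a cycle of length $\ell\le 2k$ in $T$ must therefore be a triangle $aa'b$ or a $4$-cycle $a\,b\,a'\,b'$. Any cycle that genuinely uses an edge of $G_0$ in one direction only projects to a closed walk containing a cycle of $G_0$, so it has length $>2k$. Because $k\ge3$, neither $3$ nor $4$ equals $2k$, so Constructor never creates $C_{2k}$ as long as she only claims edges of $T$. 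This is exactly where $k\ge 3$ is used.

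The second step is the game strategy. Constructor first claims the matching edges $aa'$; she wins a constant fraction of them by simply playing them before anything else, or more cleanly by restricting to those Blocker has not touched. Next, for each vertex $b\in B$ and each neighbour $a$ of $b$, the pair $\{ab,a'b\}$ is a "triangle pair". Constructor uses a pairing strategy: whenever Blocker claims one edge of a pair, she answers with the other edge of the same pair if it is still free, and otherwise she claims an edge from a fresh pair. This is not enough on its own, since she needs both edges of a pair. So instead she plays proactively: she claims $ab$ and then needs $a'b$, which Blocker may take.

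To handle this, I would exploit degrees and replace a single copy by several duplicates. Take $s=3$ copies $a^{(1)},a^{(2)},a^{(3)}$ and claim all matching triangles among them, giving cliques of size $3\le 2k-1$. The projection argument still yields only cycles of length $\le 4$ inside blobs of size $3$ plus $B$. A triangle is then obtained from any two copies adjacent to $b$ plus the blob edge. For each edge $ab$ of $G_0$ there are three edges $a^{(i)}b$, and Constructor moving first on these triples gets at least $2$ of them against Blocker's replies if she treats each triple as a local game and responds within it. The standard argument is that whenever Blocker plays into a triple, Constructor replies in that same triple; otherwise she opens a new triple. Blobs are handled the same way, with the number of copies chosen slightly larger to be safe. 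Then a constant fraction of the triples are won twice, giving $\Omega(e(G_0))=\Omega_k(\ex(n,\cC_{\le 2k}))$ triangles.

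The main obstacle I expect is the cycle analysis once copies and blob edges are added: I must verify that blob cliques of bounded size together with high girth create only cycles of length at most $4$ below $2k$. A bigger blob would create a $C_{2k}$ once $2k\le$ the blob size plus small extras, so the blob size must stay small (say $3$). The local-game bookkeeping is routine by comparison.
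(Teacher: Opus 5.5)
There is a genuine gap, and it is where you expected it: the safety of the three-copy template. Your Step~1 analysis is for the single-clone template, which you then abandon. For the template you actually use, the claim that only cycles of length at most $4$ occur is false.

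At a vertex $a$, the local piece is the triangle on $a^{(1)},a^{(2)},a^{(3)}$ joined to $N_{G_0}(a)$.
\begin{itemize}
\item If Constructor holds $a^{(1)}b_1,a^{(2)}b_1$ and $a^{(1)}b_2,a^{(3)}b_2$, then $a^{(2)}b_1a^{(1)}b_2a^{(3)}a^{(2)}$ is a $C_5$.
\item If she also holds $a^{(2)}b_3,a^{(3)}b_3$, then $b_1a^{(2)}b_3a^{(3)}b_2a^{(1)}b_1$ is a $C_6$.
\end{itemize}
Your local triple game gives her no control over which two copies she ends up with at each $b$. If she opens a triple with $a^{(2)}b$ or $a^{(3)}b$, Blocker takes $a^{(1)}b$, and her forced reply gives her the pair $\{a^{(2)}b,a^{(3)}b\}$. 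Nothing in your strategy stops her from later completing such a pair in a triple Blocker opened, or taking all three edges in a triple he ignored. So for $k=3$ Blocker can steer play until a move your strategy prescribes closes a $C_6$. That move is illegal, the argument collapses, and the proposition includes $k=3$. For $k\ge4$ the piece has circumference $6<2k$, so the projection-plus-leaf argument would survive there. But your suggestion to take more copies ``to be safe'' goes the wrong way, since $s$ copies contain $K_{s,s}\supseteq C_{2s}$.

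The repair is to make one copy an anchor and never let a $b$ see two spares.
\begin{itemize}
\item Drop the edge $a^{(2)}a^{(3)}$. No triangle needs it, and Blocker could deny it anyway, since he can answer the closing edge of any prescribed triangle.
\item Claim the star $a^{(1)}a^{(2)},a^{(1)}a^{(3)}$ on fresh vertices, and embed the $B$-side vertices on fresh vertices afterwards.
\item Always open a fully unclaimed triple with $a^{(1)}b$. This creates two threats $a^{(2)}b,a^{(3)}b$; Blocker can kill at most one, and she claims exactly one spare edge at $b$.
\end{itemize}
Each piece then consists of the anchor, its two spares, and neighbours $b$ each joined to the anchor and to at most one spare. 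Its only cycles have length $3$ or $4$, so your projection argument works for every $k\ge3$. Since each Blocker move damages at most one triple, she completes $\Omega(e(G_0))$ triangles.

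This is essentially the paper's construction, a vertex $x$ with leaves $a_x,b_x$. The difference is that the paper first plays out all of $B$, then picks a single active leaf per $x$ by averaging Blocker's interference. Her final graph then lies in $\Gamma(B)$ after deleting degree-one vertices.
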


We approach this bound by using a vertex duplication strategy. We first construct a dense bipartite base graph that avoids short cycles. We then define a duplication operation and prove that creating clone vertices on one side of this graph forms triangles without introducing any new $C_{2k}$. Finally, we provide a three-phase strategy for Constructor to safely build this structure against Blocker. 

To make this approach precise, we begin by defining the duplication operation.

\begin{definition}[Duplication graph]
Let $B=(X,Y;E_B)$ be a bipartite graph. The duplication graph $\Gamma(B)$ is obtained from $B$ by adding a clone vertex $x'$ for every $x \in X$. The vertex set is $X \cup Y \cup X'$, where $X':=\{x':x \in X\}$. The edge set consists of the original edges $xy \in E_B$, the vertical edges $xx'$ for $x \in X$, and the clone edges $x'y$ whenever $xy \in E_B$.
\end{definition}

Before applying this operation generally, it is helpful to understand its behaviour when the base graph is acyclic.

\begin{lemma} \label{lem:forest-duplication}
If $T=(X,Y;E_T)$ is a forest, then every cycle in $\Gamma(T)$ has length 3 or 4.
\end{lemma}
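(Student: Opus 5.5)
The plan is to take an arbitrary cycle $C$ in $\Gamma(T)$ and project it onto $T$ through the map $\pi$ that fixes $X\cup Y$ and sends each clone $x'$ to $x$. Under $\pi$, original and clone edges become edges of $T$, and vertical edges $xx'$ collapse to the single vertex $x$. Because $T$ is a forest, the projected closed walk cannot contain a genuine cycle. We then show that this forces $C$ to be very short.

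First we record the local structure. The neighbourhood of $x'$ is $\{x\}\cup N_T(x)$, and the neighbourhood of $x$ is $\{x'\}\cup N_T(x)$. So $x$ and $x'$ are adjacent twins. The only edges inside $X\cup X'$ are the vertical edges, and $Y$ is independent.

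Next we assume $C$ is a cycle of length $\ell\ge 5$ and derive a contradiction. Walk around $C$ and apply $\pi$. Each vertical edge $xx'$ used by $C$ gets contracted, so we obtain a closed walk $W$ in $T$ of length $\ell - v$, where $v$ is the number of vertical edges on $C$. Since $C$ visits each of $x,x'$ at most once, $v$ is at most the number of pairs $\{x,x'\}$ that $C$ meets.

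A closed walk in a forest must backtrack: some edge is traversed and then immediately reversed. This reversal happens at a vertex $z$ of $W$ whose neighbours in $W$ along the reversal both project to the same vertex.

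There are two ways this can arise.
\begin{enumerate}
\item[(a)] The turn is at $y\in Y$, and the two neighbours of $y$ on $C$ are $x$ and $x'$. Then $x,y,x'$ together with the edge $xx'$ form a triangle. If $C$ does not use the edge $xx'$, the path $x\,y\,x'$ can be shortcut by $xx'$ to give a shorter cycle. Equivalently, one can reroute so that $x$ and $x'$ are contracted together.
\item[(b)] The turn is at an image $x$, and the path in $C$ is $y\,x\,\ldots\,x'\,y$. This is impossible because $y$ would be repeated.
\end{enumerate}

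A cleaner argument proceeds by contraction and counting. Contract every vertical edge; this keeps the structure simple. Identifying each $x$ with $x'$ turns $C$ into a closed walk in $T$. Since $T$ is acyclic, every $y$-vertex on $C$ must have both of its $C$-neighbours in the same pair $\{x,x'\}$. To see this, suppose $y$ has $C$-neighbours in two distinct pairs $\{a,a'\}$ and $\{b,b'\}$. Follow the subsequent part of the walk: the projected walk leaves $y$ toward $a$ and must return to $y$ from $b$. This yields an $a$–$b$ walk in $T-y$, and together with $a\,y\,b$ it gives a cycle in $T$, a contradiction.

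One point needs care here. A projected walk from $a$ to $b$ avoiding $y$ must be checked to really avoid $y$, which holds because $C$ visits $y$ only once. Hence every $Y$-vertex on $C$ is adjacent on $C$ to exactly $x$ and $x'$ for a single pair.

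From this, $C$ alternates between pairs and $Y$-vertices. Each pair contributes at most the two vertices $x,x'$, and those are the only neighbours of its $y$-vertices on $C$. Therefore $C$ uses only one pair.

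If $C$ uses one pair and $m$ vertices of $Y$, each $y$ must be adjacent on $C$ to both $x$ and $x'$. Since $x$ and $x'$ each have degree $2$ in $C$, this gives $m\le 2$. The possible cycles are then $x\,y\,x'$ (length 3) and $x\,y_1\,x'\,y_2$ (length 4). If $C$ avoids $Y$ altogether, it lies inside $X\cup X'$, which is a matching and has no cycles.

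The main obstacle is justifying the step that a $Y$-vertex with $C$-neighbours in two different pairs yields a cycle in $T$. I would handle it by choosing a minimal such $y$ and extracting a path from the projected walk, using the fact that the projection of the $a$-to-$b$ arc of $C$ avoiding $y$ contains an $a$–$b$ path in $T-y$.
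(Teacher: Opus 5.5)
Your proof is correct. The second (``cleaner'') argument carries it, and it takes a different route from the paper.

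The paper splits $\Gamma(T)$ into the edge-disjoint pieces $Q_x$ induced on $\{x,x'\}\cup N_T(x)$. It notes that the piece--$Y$ incidence graph is a copy of $T$, so two pieces share at most one vertex. It then takes a leaf piece of the minimal subtree spanned by the pieces met by $C$ and shows $C$ cannot use edges of two pieces. The length bound follows because $Q_x$ is $K_{2,|N_T(x)|}$ plus the edge $xx'$.

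You instead project by $\pi$ and get a vertex-local criterion. Suppose some $y\in Y$ on $C$ had $C$-neighbours in two different twin pairs $\{a,a'\}$ and $\{b,b'\}$. Then $C-y$ projects to an $a$--$b$ walk in $T-y$; it avoids $y$ because $\pi^{-1}(y)=\{y\}$ and $C$ passes through $y$ once. Together with $ay$ and $by$, this closes a cycle in $T$. A vertex of $X\cup X'$ is adjacent only to its twin and to $Y$, so this confines $C$ to one pair and its common neighbours. The degree count then gives length $3$ or $4$.

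In effect you prove directly that $C$ lies in a single $Q_x$. You need neither the auxiliary tree $T'$ nor the leaf-piece step, and you make explicit the within-piece count that the paper calls clear. Your projection is also the same device the paper uses in the next lemma to pass from $C$ to $H\subseteq B$. The paper's route, in turn, is a block-tree argument that transfers to any graph glued from small pieces along a forest pattern.

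Two presentational points:
\begin{itemize}
\item Your opening backtracking sketch with cases (a)--(b) does not reach a contradiction by itself, since shortcutting $x\,y\,x'$ by $xx'$ may leave a legitimate $4$-cycle. It can be dropped.
\item Choosing a ``minimal'' $y$ at the end is unnecessary, since the projection argument works for every $y$.
\end{itemize}
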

\begin{proof}
For each $x \in X$, let $Q_x$ be the induced subgraph of $\Gamma(T)$ on the vertices $\{x, x'\} \cup N_T(x)$. By construction, each $Q_x$ is formed by taking a complete bipartite graph with parts $\{x, x'\}$ and $N_T(x)$, and then adding the edge $xx'$. Every cycle completely inside a single $Q_x$ clearly has length 3 or 4.

\begin{claim}
A cycle in $\Gamma(T)$ cannot contain edges from two different pieces.
\end{claim}
\begin{poc}
We define an auxiliary bipartite graph $T'$ with parts $\{Q_x\}_{x \in X}$ and $Y$. We add an edge between $Q_x$ and $y$ if $xy \in E(T)$. This graph $T'$ is isomorphic to $T$, so $T'$ is also a forest.

Assume for contradiction that a cycle $C$ in $\Gamma(T)$ uses edges from at least two different pieces. We take the smallest subtree of $T'$ that contains all pieces intersecting $C$. This subtree must have a leaf piece, say $Q_x$. Since $Q_x$ is a leaf in $T'$, it shares at most one vertex $y \in Y$ with all other pieces used by $C$. Any path of $C$ that enters $Q_x$ must leave $Q_x$ through the same vertex $y$. This forces $C$ to visit $y$ twice, which is impossible for a simple cycle.
\end{poc}

Therefore, every cycle of $\Gamma(T)$ is completely contained within a single $Q_x$, and hence has length 3 or 4.
\end{proof}

Building on this local property, we can now establish the global safety of the duplication graph when the base graph avoids cycles up to length $2k$.

\begin{lemma} \label{lem:duplication-safe}
Let $k \ge 3$. If $B=(X,Y;E_B)$ contains no cycle of length at most $2k$, then $\Gamma(B)$ contains no $C_{2k}$.
\end{lemma}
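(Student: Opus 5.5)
The plan is to reduce the statement to Lemma~\ref{lem:forest-duplication} by projecting a hypothetical $2k$-cycle of $\Gamma(B)$ back onto $B$. The projection will show that the cycle uses only edges of a forest.

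Suppose for contradiction that $C$ is a cycle of length $2k$ in $\Gamma(B)$.

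\emph{Step 1: project onto $B$.} Define $\pi\colon X\cup Y\cup X'\to X\cup Y$ by $\pi(x')=x$ and $\pi(v)=v$ otherwise. Each edge of $C$ is one of three kinds:
\begin{itemize}
\item a vertical edge $xx'$, which $\pi$ collapses to a single vertex;
\item an original edge $xy\in E_B$;
\item a clone edge $x'y$, which $\pi$ maps to $xy\in E_B$.
\end{itemize}
Let $F\subseteq E_B$ be the set of edges of $B$ that arise as images of non-vertical edges of $C$. Then $|F|\le 2k$.

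\emph{Step 2: show $F$ is a forest.} Let $T$ be the spanning subgraph $(X,Y;F)$ of $B$. Any cycle in $T$ uses only edges of $F$, so its length is at most $|F|\le 2k$. Since $B$ has no cycle of length at most $2k$, $T$ contains no cycle at all, so $T$ is a forest.

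Connectivity of $\pi(C)$ is not needed here: the edge count alone bounds the length of any cycle inside $F$.

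\emph{Step 3: place $C$ inside $\Gamma(T)$.} Because $T$ keeps the full vertex sets $X$ and $Y$, the graph $\Gamma(T)$ has vertex set $X\cup Y\cup X'$ and contains every vertical edge $xx'$. By the definition of $F$, every original edge $xy$ of $C$ lies in $F$. Likewise, every clone edge $x'y$ of $C$ has $xy\in F$, so $x'y$ is a clone edge of $\Gamma(T)$. Hence $C\subseteq\Gamma(T)$.

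\emph{Step 4: conclude.} By Lemma~\ref{lem:forest-duplication}, every cycle of $\Gamma(T)$ has length $3$ or $4$. But $C$ has length $2k\ge 6$ because $k\ge 3$, a contradiction. Hence $\Gamma(B)$ contains no $C_{2k}$.

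The main point requiring care is Step~3. We must check that the duplication operation commutes with taking the edge subgraph, meaning that $\Gamma(T)$ is exactly the subgraph of $\Gamma(B)$ consisting of all vertical edges together with the original and clone edges over $F$. This follows directly from the definition.

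The hypothesis $k\ge3$ is used only in the final length comparison. For $k=2$ the argument correctly fails, since $\Gamma(B)$ does contain $4$-cycles.
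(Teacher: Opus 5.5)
Your proof is correct and follows the paper's argument essentially verbatim: you project the hypothetical $2k$-cycle onto $B$, note that the image edge set has at most $2k$ edges and is therefore a forest by the girth hypothesis, and then apply Lemma~\ref{lem:forest-duplication} to the duplication graph of that forest. Your choice to keep $T$ spanning, so that every vertical edge $xx'$ is present in $\Gamma(T)$, makes explicit a detail the paper leaves implicit.
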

\begin{proof}
Assume for contradiction that $\Gamma(B)$ contains a cycle $C$ of length exactly $2k$. We define a subgraph $H \subseteq B$ by keeping the edge $xy \in E_B$ whenever $C$ uses either $xy$ or $x'y$. The vertical edges $xx'$ do not add edges to $H$. Since $C$ has $2k$ edges, $H$ has at most $2k$ edges.

If $H$ contains a cycle, the length of this cycle is at most $2k$. This contradicts our assumption that $B$ has no such cycles. Therefore, $H$ must be a forest. By our construction, $C$ is a subgraph of $\Gamma(H)$. However, since $H$ is a forest, Lemma \ref{lem:forest-duplication} implies that every cycle in $\Gamma(H)$ has length 3 or 4. This contradicts the fact that $C$ has length $2k \ge 6$. Thus, $\Gamma(B)$ is  $C_{2k}$-free.
\end{proof}

Finally, our strategy requires a dense base graph. We guarantee its existence using a standard scaling lemma, which follows immediately from
the result of Katona, Nemetz, and Simonovits~\cite{KNS} that, for any graph
family \(\mathcal F\), the ratio
\[
\frac{\operatorname{ex}(n,\mathcal F)}{\binom{n}{2}}
\]
is non-increasing in \(n\).

\begin{lemma} \label{lem:scaling}
Fix $0 < \alpha < 1$. Let $N = \lfloor \alpha n \rfloor$. For every graph family $\mathcal{F}$, we have 
\[
\ex(N,\mathcal{F}) \ge \frac{\alpha^2}{4}\ex(n,\mathcal{F}).
\]
\end{lemma}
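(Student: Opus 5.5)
We prove the bound directly from the monotonicity result of Katona, Nemetz and Simonovits quoted just above: for every family $\mathcal F$, the ratio $\ex(m,\mathcal F)/\binom{m}{2}$ is non-increasing in $m$. Since $N=\lfloor \alpha n\rfloor\le n$, monotonicity gives
\[
\ex(N,\mathcal F)\ \ge\ \frac{\binom{N}{2}}{\binom{n}{2}}\,\ex(n,\mathcal F)=\frac{N(N-1)}{n(n-1)}\,\ex(n,\mathcal F),
\]
so it remains to show $\frac{N(N-1)}{n(n-1)}\ge \alpha^2/4$.

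For the main range, suppose $\alpha n\ge 2$. Then $N\ge \alpha n-1\ge \alpha n/2$, since $\alpha n/2\ge 1$. Also $N-1\ge \alpha n-2$, and the claim $N-1\ge \alpha(n-1)/2$ is equivalent to $\alpha n+\alpha\ge 4$. That fails for small $n$, so a cleaner pairing is needed. Instead use $N\ge \alpha n/2$ together with $N-1\ge \alpha(n-1)/2$ only when $\alpha n\ge 4-\alpha$. In the intermediate range $2\le \alpha n<4$ we have $N\in\{1,2,3\}$; if $N=1$ then $\ex(N,\mathcal F)=0$, which is exactly the delicate point.

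So the main obstacle is the tiny range where $N\le 1$ and $\ex(N,\mathcal F)=0$. There the inequality can only hold if $\ex(n,\mathcal F)=0$ as well, and this need not be true for small $n$. We therefore read the lemma as an asymptotic statement, valid for $n\ge n_0(\alpha)$, say $n\ge 4/\alpha$. That is all that is needed for the $\Omega_k$ bounds, and small $n$ is absorbed into the implied constant. For $n\ge 4/\alpha$ we have $\alpha n\ge 4\ge 4-\alpha$, so both estimates above hold and
\[
\frac{N(N-1)}{n(n-1)}\ge \frac{(\alpha n/2)\cdot(\alpha(n-1)/2)}{n(n-1)}=\frac{\alpha^2}{4},
\]
which proves the lemma.

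If one wants the statement for all $n$, handle the remaining cases separately: when $\alpha n<4$ and $N\ge 2$, check the finitely many configurations directly. Otherwise note that the lemma is only ever applied with $n$ large. I expect this small-$n$ bookkeeping to be the only subtle part; the core of the proof is the single application of monotonicity.
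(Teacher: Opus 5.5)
Your proof is correct and is essentially the paper's argument: the paper invokes the Katona--Nemetz--Simonovits monotonicity of $\ex(m,\mathcal F)/\binom{m}{2}$ and leaves the bound $\binom{N}{2}/\binom{n}{2}\ge\alpha^2/4$ implicit, which you verify for $n\ge 4/\alpha$. Your caveat is justified, and the literal statement fails for small $n$ even when $N=2$ (take $\mathcal F=\{K_{1000}\}$, $\alpha=0.01$, $n=290$, where $\ex(2,\mathcal F)=1<\frac{\alpha^2}{4}\binom{290}{2}\approx 1.05$), so the lemma must be read for $n\ge n_0(\alpha)$ as you do. Your fallback of checking the remaining small cases directly cannot succeed, and note that $2\le\alpha n<4$ gives $N\in\{2,3\}$, not $\{1,2,3\}$.
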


With these structural and extremal tools established, we now proceed to the formal proof of Proposition \ref{prop:lower-k-3}. Constructor will execute her strategy in three distinct phases.

\begin{proof}[Proof of Proposition \ref{prop:lower-k-3}]
We set
\[
N = \lfloor n/20 \rfloor.
\]
We choose an $N$-vertex graph $G_0$ that contains no cycles of length at most $2k$, such that $e(G_0) = \ex(N, \cC_{\le 2k})$. We fix a bipartite subgraph $B = (X,Y; E_B)$ of $G_0$ such that $|X| \le |Y|$ and $e(B) \ge e(G_0)/2$. By Lemma \ref{lem:scaling}, we have 
\[
e(B) \ge c \cdot \ex(n, \cC_{\le 2k})
\]
for some constant $c > 0$. We call a vertex fresh if neither player has claimed an edge incident to it. 

\vspace{0.2cm}
\noindent\textbf{Phase I: Building disjoint stars}

For each $x \in X$, Constructor chooses a fresh vertex and labels it $x$. She then chooses two additional fresh vertices $a_x$ and $b_x$, and claims the two edges $xa_x$ and $xb_x$.

\begin{claim}
After Phase I, the board contains at least $|Y|$ fresh vertices.
\end{claim}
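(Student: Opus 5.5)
This is a counting argument on vertices touched by claimed edges. I will bound how many vertices can be non-fresh at the end of Phase I and compare with $n$.

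First, the cost of Phase I. For each $x\in X$, Constructor spends two moves (claiming $xa_x$ and $xb_x$) and touches three previously fresh vertices $x,a_x,b_x$. So Phase I lasts $2|X|$ Constructor moves. Blocker replies after each of them, giving at most $2|X|$ Blocker edges. Each Blocker edge can destroy the freshness of at most two vertices. Hence the total number of non-fresh vertices after Phase I is at most
\[
3|X| + 2\cdot 2|X| = 7|X|.
\]

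Second, the comparison with $n$. Since $X$ and $Y$ are disjoint subsets of $V(G_0)$, we have $|X|\le |Y|$ and $|X|+|Y|\le N=\lfloor n/20\rfloor$. The number of fresh vertices remaining is therefore at least
\[
n-7|X| \ge n - 7N \ge n - \tfrac{7n}{20} > \tfrac{n}{20} \ge N \ge |Y|,
\]
which is the claim.

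The one point needing care is to check that the strategy is well defined throughout Phase I. At each step Constructor needs three fresh vertices. Before processing the last $x$, at most $7(|X|-1)$ vertices are non-fresh. This leaves at least $n-7N\ge 3$ fresh vertices for $n$ large, so the same count covers every intermediate step. If $n$ is small, the statement is absorbed into the implicit constant of $\Omega_k$, so we may assume $n\ge n_0(k)$.

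I expect no real obstacle here. The only thing to watch is the accounting order: Blocker's reply to the final Constructor move of Phase I must also be counted, which is why I count $2|X|$ Blocker edges rather than $2|X|-1$.
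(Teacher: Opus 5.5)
Your proof is correct and is essentially the paper's argument: bound the number of vertices touched by the at most $4|X|$ edges claimed during Phase I, then compare with $n$ using $N=\lfloor n/20\rfloor$; your count $3|X|+4|X|=7|X|$ is a slightly sharper version of the paper's $8|X|\le 4N\le n/5$. Your extra well-definedness check is fine, with one small caveat shared by the paper's wording: $b_x$ should be chosen fresh only after Blocker's reply to $xa_x$, so that $xb_x$ is guaranteed unclaimed, and your count shows such a vertex always exists.
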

\begin{poc}
This phase requires $2|X|$ Constructor moves. Since Blocker makes an equal number of responses, the two players claim at most $4|X|$ edges in total. These edges touch at most $8|X| \le 4N \le n/5$ vertices. Since $|Y| \le N \le n/20$, the number of touched vertices is small enough to guarantee that Constructor can find $|Y|$ fresh vertices.
\end{poc}

Constructor assigns the labels of $Y$ to these fresh vertices. No edges are claimed in this labeling step. Because all vertices in $Y$ are currently fresh, all possible edges between $X$ and $Y$ are entirely unclaimed. With the vertices successfully prepared, we move to the next phase.

\vspace{0.2cm}
\noindent\textbf{Phase II: Claiming edges of $B$}

Constructor now plays exclusively on the unclaimed edges of $B$. She continues this until every edge of $B$ is claimed by one of the two players. Let $C_B \subseteq E_B$ be the set of edges claimed by Constructor.

\begin{claim}
Constructor successfully claims $|C_B| \ge e(B)/2$ edges, and every move in this phase is legal.
\end{claim}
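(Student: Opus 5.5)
The claim has two parts: a counting bound on $|C_B|$ and the legality of every Constructor move in Phase II. I will handle them separately.

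\emph{The counting bound.} After Phase I, every vertex of $Y$ is fresh, so at the start of Phase II all $e(B)$ edges of $B$ are unclaimed. In Phase II Constructor always claims an unclaimed edge of $B$. Blocker answers each move with one edge, which may or may not lie in $B$. So each Constructor move is followed by at most one further edge of $B$ becoming Blocker's, and the phase ends when no edge of $B$ is unclaimed. Let $m$ be the number of Constructor moves in Phase II. Then Blocker takes at most $m$ edges of $B$, giving $e(B) \le 2m$. Hence $|C_B| = m \ge e(B)/2$.

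If Blocker's final reply happens after $B$ is exhausted, or falls outside $B$, this only helps. One technicality is parity: if Blocker's last move claims the last unclaimed edge of $B$, the bound $e(B)\le 2m$ still holds. So no special case is needed.

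\emph{Legality.} Constructor's graph during Phase II is contained in
\[
S := \{xa_x, xb_x : x\in X\} \cup C_B .
\]
I will show that $S$ is $C_{2k}$-free, and then any subgraph of it is too. Each pendant edge $xa_x$ or $xb_x$ ends at a vertex of degree $1$ in $S$, since $a_x,b_x$ were fresh and are used nowhere else. A vertex of degree $1$ lies on no cycle, so every cycle of $S$ lies inside $C_B \subseteq E_B$.

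Next I need that the labelled graph on $X\cup Y$ is really a copy of $B$, that is, the labels are distinct real vertices. The vertices of $Y$ were chosen fresh after Phase I, so they are distinct from all $x$, $a_x$ and $b_x$. A subgraph of $B$ has no cycle of length at most $2k$, so $S$ is $C_{2k}$-free.

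Alternatively, one can view $S$ as a subgraph of $\Gamma(B)$ by identifying $a_x$ with $x'$ and $b_x$ with some neighbour, and then apply Lemma~\ref{lem:duplication-safe}. The direct argument is simpler and suffices here.

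The only point needing care is that Blocker's edges inside $B$ cannot make Constructor's move illegal. Legality depends only on Constructor's own graph, so this causes no problem.
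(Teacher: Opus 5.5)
Your proof is correct and follows the paper's argument. Blocker takes at most one edge of $B$ per Constructor move, which gives $|C_B|\ge e(B)/2$, and $a_x,b_x$ are leaves, so every cycle of Constructor's graph lies in $C_B\subseteq E_B$, which has no cycle of length at most $2k$. Your aside about embedding $S$ into $\Gamma(B)$ by identifying $b_x$ with a neighbour of $x$ does not give an injective embedding in general; the fix would be to delete the degree-one vertices first, as the paper does in Phase III. Your direct argument does not need the aside.
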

\begin{poc}
Since she moves first and Blocker can take at most one edge of $B$ per round, she clearly claims at least half of the total edges in $B$. At any point in this phase, Constructor's graph is a subgraph of $B$ together with the leaf vertices $a_x$ and $b_x$. Because $B$ has no cycles of length at most $2k$, and adding leaves to a graph does not create new cycles, every edge of $B$ is legal to claim.
\end{poc}

As Constructor has secured a dense bipartite skeleton, she is ready to build the actual triangles.

\vspace{0.2cm}
\noindent\textbf{Phase III: Closing triangles}

For each $x \in X$, Constructor must choose one of the two leaves $a_x$ or $b_x$ to be active. We call this chosen leaf $\ell_x$. She makes this choice carefully to minimize the interference from Blocker. For a fixed $x$, she compares the number of Blocker-owned edges of the form $a_x y$ with the number of Blocker-owned edges of the form $b_x y$, where $y \in Y$ and $xy \in C_B$. She sets $\ell_x$ to be the leaf with the smaller number of such Blocker-owned edges.

\begin{claim} \label{claim:unclaimed-candidates}
By choosing the active leaves in this way, Constructor guarantees a set $Q$ of at least $e(B)/2$ unclaimed candidate edges of the form $\ell_x y$.
\end{claim}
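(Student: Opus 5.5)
The plan is a double count. For each $x\in X$, the candidate edges are $\ell_x y$ with $xy\in C_B$, so there are exactly $\sum_{x}\deg_{C_B}(x)=|C_B|$ potential candidates, one for each Constructor edge of $B$. These are distinct edges, since the leaves $a_x,b_x$ are distinct fresh vertices for different $x$ and are not in $Y$. I then bound how many of them Blocker can already own, and check that Constructor owns none.

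First, Constructor owns none of the edges $a_xy$ or $b_xy$ with $y\in Y$. In Phase I she claimed only the edges $xa_x$ and $xb_x$. In Phase II she claimed only edges of $B$, which go between $X$ and $Y$, and $a_x,b_x\notin X\cup Y$. So a candidate edge is unclaimed unless Blocker owns it.

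Second, I bound Blocker's ownership.
\begin{itemize}
\item Every vertex of $Y$ was fresh at the end of Phase I, so no Blocker edge from Phase I touches $Y$. Hence all Blocker edges of the form $a_xy$ or $b_xy$ with $y\in Y$ were claimed during Phase II.
\item Let $b_B:=e(B)-|C_B|$ be the number of edges of $B$ that Blocker claimed. Blocker makes at most one move per Constructor move in Phase II, so he makes at most $|C_B|$ moves there, counting a possible reply to her last Phase II move.
\item Of these moves, $b_B$ are spent on edges of $B$. So at most $|C_B|-b_B$ of his Phase II edges lie outside $B$, and only these can be of the form $a_xy$ or $b_xy$.
\end{itemize}

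Third, the choice of active leaf halves this. For a fixed $x$, let $\beta_x$ be the number of Blocker edges among $\{a_xy,b_xy: xy\in C_B\}$. The chosen leaf $\ell_x$ carries at most $\beta_x/2$ of them. Summing over $x$, the number of Blocker-owned candidate edges is at most
\[
\tfrac12\sum_x\beta_x\le\tfrac12\bigl(|C_B|-b_B\bigr).
\]
Therefore
\[
|Q|\ge |C_B|-\tfrac12\bigl(|C_B|-b_B\bigr)=\tfrac12\bigl(|C_B|+b_B\bigr)=\tfrac12 e(B).
\]
Here I use that every edge of $B$ was claimed by someone by the end of Phase II, so $|C_B|+b_B=e(B)$.

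The main obstacle is the accounting of Blocker's moves. One must make sure that his Phase I moves cannot touch $Y$, which is guaranteed because $Y$ was chosen fresh after Phase I. One must also use that his Phase II moves spent inside $B$ (the $b_B$ term) are exactly what cancels the loss of $C_B$ edges. This cancellation is what makes the bound $e(B)/2$, rather than $|C_B|/2$, come out exactly.
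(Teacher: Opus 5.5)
Your proposal is correct and follows the paper's argument exactly. Blocker has at most $|C_B|$ moves in Phase II, exactly $e(B)-|C_B|$ of which land in $B$, so at most $2|C_B|-e(B)$ can hit candidate edges; the choice of $\ell_x$ halves this, giving $|Q|\ge |C_B|-\tfrac12(2|C_B|-e(B))=e(B)/2$. Your explicit checks that Blocker's Phase I moves cannot touch $Y$, and that Constructor owns no candidate edge, spell out points the paper leaves implicit.
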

\begin{poc}
Let $M = e(B)$. The total number of edges in $B$ claimed by Blocker is exactly $M - |C_B|$. The number of Blocker moves not spent on $B$ is at most $|C_B| - (M - |C_B|) = 2|C_B| - M$. Only these non-$B$ moves could have been spent on the candidate edges connecting the leaves to $Y$.

Because Constructor chose the active leaf to minimize Blocker's interference, the total number of Blocker owned candidate edges involving the active leaves is at most half of the available non $B$ Blocker moves. This is at most $(2|C_B| - M)/2$.

Therefore, the number of still unclaimed active candidate edges $\ell_x y$ where $xy \in C_B$ is at least
\[
|C_B| - \frac{2|C_B| - M}{2} = \frac{M}{2} = \frac{e(B)}{2}.
\]
We define $Q$ to be this set of unclaimed candidate edges, so $|Q| \ge e(B)/2$.
\end{poc}

In this final phase, Constructor only claims edges from $Q$. Whenever she successfully claims an edge $\ell_x y \in Q$, she forms the triangle $x \ell_x y$ because she already owns $x \ell_x$ and $xy$. By Lemma \ref{lem:duplication-safe}, claiming these edges is  legal because her final graph, after deleting vertices of degree one, is a subgraph of the duplication graph $\Gamma(B)$. Blocker can take at most one edge of $Q$ per round. Thus, Constructor claims at least $|Q|/2 \ge e(B)/4$ edges, which yields $\Omega_k(\ex(n, \cC_{\le 2k}))$ triangles.
\end{proof}

\begin{remark}
The vertex duplication strategy fails when the forbidden graph is a $4$-cycle. If a vertex $x$ is duplicated to a clone $x'$, and $x$ has at least two common neighbours $y_1$ and $y_2$ in the base graph $B$, the vertices $x, y_1, x', y_2$ immediately form a $C_4$. To prevent this, every vertex in $X$ could have at most one neighbour in $Y$, which means the base graph could only have $O(n)$ edges. This makes it impossible to build a dense graph. Therefore, we cannot use this structural method for the $C_4$-free game.
\end{remark}

\medskip

\subsection{The lower bound for the $4$-cycle ($k=2$)} \label{subsec:triangles-k2}

In this subsection, we prove the matching lower bound for the $4$-cycle.

\begin{proposition} \label{prop:lower-k-2}
The game score for the $4$-cycle satisfies 
\[
g(n, K_3, C_4) \ge \Omega(n^{3/2}).
\]
\end{proposition}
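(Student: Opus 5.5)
The plan is to play inside a fixed $C_4$-free template graph that has $\Theta(n^{3/2})$ triangles. Legality is then automatic, since every subgraph of a $C_4$-free graph is $C_4$-free. The work is in the order in which the three edge classes of these triangles are claimed, together with a fresh-vertex labelling trick as in the proof of Proposition~\ref{prop:lower-k-3}.

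\textbf{Template.} I would take the Erd\H{o}s--R\'enyi polarity graph $ER_q$ for a prime $q$ with $3(q^2+q+1)\le n/20$. It is $C_4$-free and has $\Theta(q^3)=\Theta(n^{3/2})$ triangles (about $q(q^2-1)/6$ for odd $q$). Since $ER_q$ is $C_4$-free, every edge lies in at most one triangle: two triangles on a common edge would form a $C_4$. Among all $3$-colourings of $V(ER_q)$, a uniformly random one makes a triangle rainbow with probability $2/9$, so some colouring has classes $A,B,C$ with $m=\Omega(n^{3/2})$ rainbow triangles. Let $G$ be the tripartite graph formed by the edges of these rainbow triangles. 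Then $G$ is $C_4$-free, its triangles are pairwise edge-disjoint, and each triangle has exactly one edge in each of $A\!-\!B$, $A\!-\!C$ and $B\!-\!C$.

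\textbf{Phase I (the $A\!-\!B$ edges).} Constructor labels fresh vertices as $A\cup B$ and claims edges of $G[A,B]$ until all of them are claimed. By the usual pairing argument she owns at least half of them. Because edges are in at most one triangle, the set $T_1$ of triangles whose $A\!-\!B$ edge she owns satisfies $|T_1|\ge m/2$. Vertices of $C$ have not yet been labelled, so Blocker's moves outside $G[A,B]$ cannot affect the other edges of triangles in $T_1$. At the end of Phase I, the counting argument from the proof of Proposition~\ref{prop:lower-k-3} (the number of touched vertices is $O(|A|+|B|+e(G))$, well below $n$ after the scaling) guarantees $|C|$ fresh vertices. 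Constructor labels these as $C$, so all edges at $C$ are unclaimed.

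\textbf{Phase II and Phase III.} In Phase II Constructor claims, one at a time, the $A\!-\!C$ edge of some triangle in $T_1$ that is still alive, meaning no edge of it belongs to Blocker. A single Blocker move kills at most one triangle because the triangles are edge-disjoint. Hence each round Constructor advances one live triangle to having two of her edges, while Blocker destroys at most one. When no live triangle in $T_1$ has an unclaimed $A\!-\!C$ edge, the set $T_2$ of live triangles in which Constructor owns two edges has $|T_2|\ge |T_1|/2-O(1)$. Phase III repeats this argument with the $B\!-\!C$ edges of $T_2$. Each Constructor move completes a triangle and each Blocker move destroys at most one, so Constructor completes at least $|T_2|/2-O(1)\ge m/8-O(1)=\Omega(n^{3/2})$ triangles.

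\textbf{Main obstacle.} The delicate point is the bookkeeping that stops Blocker from pre-emptively spending moves in earlier phases on the later edge classes. If $C$ were labelled at the start, Blocker could answer each Phase I move by taking an $A\!-\!C$ or $B\!-\!C$ edge of the very triangle Constructor is working on, and with edge-disjoint triangles this pairing strategy wins everything. (This is why a naive Maker--Breaker argument on disjoint triples fails.) Delaying the labelling of $C$ until Phase I ends, and then, within Phases II and III, relying on one kill per Blocker move, is what makes the counting work. I would check carefully that the fresh-vertex count survives and that the $O(1)$ losses at the phase boundaries are harmless.
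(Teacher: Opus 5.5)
\textbf{Phase II fails, and this is fatal.} When you claim the $A$--$C$ edge $ac$ of a live triangle $abc\in T_1$, its third edge $bc$ is still unclaimed, so Blocker can take it on his next move. Each edge of $G$ lies in at most one triangle, as in any $C_4$-free graph. Hence each of your Phase II moves creates exactly one such threat, and Blocker can kill every triangle at the moment you advance it. He still destroys ``at most one triangle per round'', but always the one you have just advanced. Then $T_2=\emptyset$ and Phase III yields nothing.

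Your bound $|T_2|\ge |T_1|/2-O(1)$ silently assumes that Blocker only destroys triangles you have not yet advanced; the correct bookkeeping gives only $|T_2|\ge 0$. The halving argument is valid in Phase III, because there each of your moves completes a triangle immediately. So delaying the labelling of $C$ does not defeat the pairing strategy from your last paragraph; it merely postpones it to Phase II. Once $C$ is labelled, every triangle still needs two more of your edges, and a Blocker who answers each single threat stops all of them.

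The missing idea is to own two edges of $\Omega(n^{3/2})$ triangles \emph{before} committing to which closing edges you will use, so that each Blocker move can spoil only $O(1)$ of them. This is what the paper does.
\begin{itemize}
\item Constructor first claims at least half of the point--line incidence graph $B$. Her $\Omega(q^3)$ incidence edges span $\Omega(q^4)$ cherries $pLp'$; on average a single incidence edge lies in $\Theta(q)$ of them.
\item Blocker's $O(q^3)$ point--point edges spoil only $O(q^3)$ of these cherries.
\item The closing edge $pp'$ of each cherry lies in exactly one of the $q$ graphs $F_\theta$, and every $B\cup F_\theta$ is $C_4$-free.
\item Only then does Constructor fix $\theta$ by pigeonhole, keeping $\Omega(q^3)$ closable cherries. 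In the closing phase each of her moves completes a triangle at once.
\end{itemize}

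Separately, your fresh-vertex count is also false. Phase I lasts up to $m=\Theta(n^{3/2})$ rounds, so $O(|A|+|B|+e(G))$ is not below $n$. Blocker can touch every vertex outside $A\cup B$ within $O(n)$ moves. This could be repaired by choosing the embedding of $C$ by averaging, since the expected loss is $O(\Delta(G)\,m/n)=O(m/q)$. That repair does not rescue Phase II.
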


To overcome the limitations of the duplication method, we use finite field geometry. Motivated by a construction of Conlon \cite{Conlon21}, we define a bipartite incidence graph between points and lines. We then introduce an auxiliary graph based on an algebraic relation to add edges between points. We will prove that the union of these two graphs is $C_4$-free, and then use the pigeonhole principle to show Constructor can safely claim enough edges to form the required number of triangles. 

To put this plan into action, we first construct the base geometric structure. Let $q$ be an odd prime power. We partition the finite field $\Fq$ into two disjoint sets $S$ and $T$, such that $|S|$ is approximately $q/3$ and $|T|$ is approximately $2q/3$. We define a set of points $P = T \times \Fq$. We also define a set of lines $\cL$ containing all lines of the form $y = mx + b$, where the slope $m$ belongs to $S$ and the intercept $b$ belongs to $\Fq$. The total number of vertices is $|P| + |\cL| = q^2$. Let $B$ be the bipartite incidence graph between $P$ and $\cL$, where a point $(x,y)$ is connected to a line $L$ if the point lies on the line.

The most crucial property of this incidence graph is that it  avoids $4$-cycles.

\begin{lemma} \label{lem:incidence-C4-free}
The incidence graph $B$ is  $C_4$-free and has $e(B) = \Theta(q^3)$ edges.
\end{lemma}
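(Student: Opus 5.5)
The proof splits into two independent parts: the edge count, and the absence of $4$-cycles. For the edge count, note that each line $y=mx+b$ with $m\in S$ contains exactly one point $(x, mx+b)$ for each $x\in T$, so it contains exactly $|T|$ points of $P$. Hence
\[
e(B)=|\cL|\cdot|T| = |S|\,q\,|T| .
\]
Since $|S|\approx q/3$ and $|T|\approx 2q/3$, this is $\Theta(q^3)$. For completeness I would fix the partition so that $|S|=\lfloor q/3\rfloor$ and $|T|=q-|S|$. Both are then at least a constant fraction of $q$ once $q\ge 3$, and smaller $q$ can be absorbed into the implicit constant.

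For $C_4$-freeness, I would use that $B$ is bipartite between $P$ and $\cL$. A $4$-cycle must then alternate sides, so it consists of two distinct points $p_1,p_2\in P$ and two distinct lines $L_1,L_2\in\cL$ with both points incident to both lines. It therefore suffices to show that two distinct points lie on at most one common line of $\cL$. Write $p_i=(x_i,y_i)$ and suppose $y_i=mx_i+b$ holds for $i=1,2$.

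If $x_1=x_2$, then $y_1=y_2$, contradicting $p_1\neq p_2$; so in fact no line of $\cL$ contains two points with the same first coordinate. Hence $x_1\ne x_2$. Subtracting the two equations gives $m=(y_1-y_2)/(x_1-x_2)$, and then $b=y_1-mx_1$. Thus $(m,b)$ is uniquely determined, and distinct pairs $(m,b)$ give distinct lines. Equivalently, two distinct non-vertical lines in $\Fq^2$ meet in at most one point, which is the same statement viewed from the line side.

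There is no real obstacle here. The only care needed is to note that restricting slopes to $S$ and first coordinates to $T$ only deletes vertices from the standard affine incidence graph, so $C_4$-freeness is inherited. The edge count also survives this restriction, because every restricted line still meets all $|T|$ admissible first coordinates. The odd characteristic of $q$ plays no role in this lemma; it is presumably needed later for the auxiliary point–point graph.
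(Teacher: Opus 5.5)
Your proposal is correct and follows the same route as the paper. A $4$-cycle in the bipartite graph $B$ would need two distinct points on two common non-vertical lines, which is impossible, and the edge count is exactly $|S||T|q=\Theta(q^3)$. Your version just spells out the algebra (solving uniquely for $(m,b)$) that the paper treats as geometrically evident.
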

\begin{proof}
Any two distinct points can lie on at most one line together. Since $B$ only contains non-vertical lines, a $C_4$ in $B$ would mean two points share two lines, which is geometrically impossible. Therefore, $B$ is $C_4$-free. The number of edges is exactly $|S||T|q$, which gives
\[
e(B) = \Theta(q^3),
\]
as required.
\end{proof}

To actually build triangles, Constructor needs to connect two points that share a common line. We define an auxiliary graph $F_\theta$ to provide these edges safely. For each $\theta \in \Fq$, we connect two distinct points $p=(x,y)$ and $p'=(x',y')$ in $F_\theta$ if $y+y' = xx' + \theta$.

\begin{lemma} \label{lem:polarity-C4-free}
For any $\theta \in \Fq$, the union graph $B \cup F_\theta$ is $C_4$-free.
\end{lemma}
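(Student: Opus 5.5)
The plan is a case analysis on how many line-vertices a hypothetical $4$-cycle in $B\cup F_\theta$ contains. Lines are only adjacent to points, since $B$ is bipartite and $F_\theta$ lives on $P$. So two lines on a $4$-cycle must be opposite vertices, and a $4$-cycle contains $0$, $1$ or $2$ lines. The key observation, used in every case, is a geometric description of $F_\theta$-neighbourhoods. For a point $p=(a,b)$, its $F_\theta$-neighbours are exactly the points $(x,y)\in P\setminus\{p\}$ with
\[
y = a x + (\theta - b).
\]
These are the points of $P$ on the non-vertical line $\lambda_p$ of slope $a$ and intercept $\theta-b$. Since $p\in P=T\times\Fq$, the slope satisfies $a\in T$.

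\emph{Two lines.} The cycle alternates point, line, point, line, so all four of its edges are incidences. It is therefore a $4$-cycle in $B$, which Lemma~\ref{lem:incidence-C4-free} rules out.

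\emph{One line.} The cycle has the form $L,p_1,p_2,p_3$ with $p_1,p_3$ on $L$ and $p_1p_2,p_2p_3\in F_\theta$. Then $p_1$ and $p_3$ are two distinct points lying on both $L$ and $\lambda_{p_2}$. The slope of $L$ lies in $S$ and the slope of $\lambda_{p_2}$ lies in $T$. Since $S\cap T=\emptyset$, these two lines are distinct and non-parallel, so they meet in at most one point. This contradicts $p_1\neq p_3$.

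\emph{No lines.} The cycle $p_1p_2p_3p_4$ lies in $F_\theta$, so $p_1=(a,b)$ and $p_3=(c,d)$ have two distinct common $F_\theta$-neighbours $p_2,p_4$. Each common neighbour lies on both $\lambda_{p_1}$ and $\lambda_{p_3}$. If $a\neq c$, these lines meet in at most one point. If $a=c$, then $b\neq d$ because $p_1\neq p_3$, so the lines are parallel with distinct intercepts and meet in no point. Either way there is at most one common neighbour, a contradiction.

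I do not expect a genuine obstacle. The only point needing care is the one-line case, which is exactly where the partition $\Fq=S\sqcup T$ is used. The slopes of $F_\theta$-neighbourhood lines come from the $x$-coordinates of points, which lie in $T$, while the lines of $\cL$ have slopes in $S$. I would also remark explicitly that $F_\theta$ has no loops, since its definition requires distinct points; this is why "two common neighbours" genuinely means two distinct vertices.
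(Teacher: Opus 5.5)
Your proposal is correct and follows essentially the same route as the paper. The paper also splits into three parts. For the pure $F_\theta$ case it shows two points have at most one common $F_\theta$-neighbour by solving the linear system, which is your intersection of $\lambda_{p_1}$ and $\lambda_{p_3}$. It cites Lemma~\ref{lem:incidence-C4-free} for $B$. It excludes the mixed cycle because $F_\theta$-neighbours of a point lie on a line with slope in $T$, while lines of $\cL$ have slopes in $S$. Your explicit count of line vertices ($0$, $1$, or $2$) only makes precise the paper's implicit claim that the one-line configuration is the only mixed one.
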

\begin{proof}
First, we show $F_\theta$ itself is $C_4$-free. If a point $u=(u_x, u_y)$ is a common neighbour of $p=(x,y)$ and $p'=(x',y')$ in $F_\theta$, then we have the system of equations $y + u_y = x u_x + \theta$ and $y' + u_y = x' u_x + \theta$. Subtracting these equations gives $y - y' = u_x(x - x')$. If $x \neq x'$, then $u_x$ and subsequently $u_y$ are uniquely determined. If $x = x'$ and $p \neq p'$, the system has no solution. Thus, any pair of points has at most one common neighbour, making $F_\theta$ $C_4$-free.

We already know $B$ is $C_4$-free. A mixed $4$-cycle must consist of a line $L$ and a point $p_1$ in $B$, a point $p_0$ connected to $p_1$ in $F_\theta$, another point $p_2$ connected to $p_0$ in $F_\theta$, and $p_2$ must be connected to $L$ in $B$. Let $p_0 = (x_0, y_0)$. Any neighbour $(u_x, u_y)$ of $p_0$ in $F_\theta$ satisfies $u_y = x_0 u_x + \theta - y_0$. This means all $F_\theta$ neighbours of $p_0$ lie on a specific line with slope $x_0$. Since $p_1$ and $p_2$ are $F_\theta$ neighbours of $p_0$, they must lie on this line. However, $p_1$ and $p_2$ also lie on the line $L$ in $B$. This forces $L$ to have slope $x_0$. This is a contradiction, since \(x_0\in T\), whereas the slope of \(L\) must belong to \(S\), and \(S\cap T=\emptyset\).
Therefore, no mixed $C_4$ can exist.
\end{proof}

Having verified the safety of our algebraic construction, we are now fully equipped to prove Proposition \ref{prop:lower-k-2}.

\begin{proof}[Proof of Proposition \ref{prop:lower-k-2}]
We play the game on $q^2 \le n$ vertices, ignoring the rest. By standard number theory, we can find an odd prime $q$ between $\frac{1}{2}\sqrt{n}$ and $\sqrt{n}$. Constructor first claims only edges of $B$, stopping when every edge of $B$ is claimed by one of the two players. Let $G_B$ be the graph of edges claimed by Constructor. Because she moves first, she claims at least half of the edges, giving 
\[
e(G_B) = \Omega(q^3).
\]

\begin{claim}
Constructor's graph $G_B$ contains a set $\mathcal{P}$ of $\Omega(q^4)$ ordered paths of length two of the form $p L p'$, where $p$ and $p'$ are points and $L$ is a line.
\end{claim}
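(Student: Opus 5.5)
The plan is to count paths of length two through lines and then apply convexity. For each line $L\in\cL$, let $d(L)$ be the degree of $L$ in Constructor's graph $G_B$. Every ordered pair of distinct points $(p,p')$ with both $pL$ and $p'L$ in $G_B$ gives an ordered path $pLp'$. The number of such paths is therefore
\[
|\mathcal P|=\sum_{L\in\cL} d(L)\bigl(d(L)-1\bigr).
\]
By construction every edge of $B$ joins a point to a line, so $\sum_{L} d(L)=e(G_B)=\Omega(q^3)$. The number of lines is $|\cL|=|S|\cdot q=\Theta(q^2)$. The average degree of a line in $G_B$ is thus $\bar d=\Omega(q)$.

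Next I apply Jensen's inequality (equivalently Cauchy--Schwarz) to the convex function $x\mapsto x(x-1)$. This gives
\[
\sum_{L} d(L)(d(L)-1)\ \ge\ |\cL|\,\bar d(\bar d-1).
\]
Since $\bar d\ge cq$ for some absolute constant $c>0$, and $q$ is large, we have $\bar d-1\ge \bar d/2$. The right-hand side is then at least $\Theta(q^2)\cdot\Omega(q^2)=\Omega(q^4)$, which proves the claim.

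The only point needing care is the lower bound on $\bar d$: the constant in $e(G_B)\ge e(B)/2=|S||T|q/2$ must be compared with $|\cL|=|S|q$. This gives $\bar d\ge |T|/2\approx q/3$, which is comfortably linear in $q$. The restriction to $q$ large enough that $q/3\ge 2$ is harmless, since small $n$ is absorbed into the $\Omega$. No geometry beyond the bipartite structure of $B$ is needed here. The $C_4$-freeness from Lemma~\ref{lem:incidence-C4-free} only becomes relevant later, where it ensures that distinct paths have distinct endpoint pairs $\{p,p'\}$, so that each closing edge $pp'$ creates at most one triangle through a line.
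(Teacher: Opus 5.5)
Your argument is correct, but it takes a different route from the paper.

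\textbf{The paper's route.} The paper first prunes $G_B$ by repeatedly deleting low-degree vertices. This gives a subgraph $G_B'$ with $\Omega(q^3)$ edges and minimum degree $\Omega(q)$. It then counts ordered paths $pLp'$ by walking: $\Omega(q^2)$ starting points, then $\Omega(q)$ choices of $L$, then $\Omega(q)$ choices of $p'$.

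\textbf{Your route.} You count directly in $G_B$ via $|\mathcal P|=\sum_L d(L)(d(L)-1)$ and apply convexity. You use only the exact parameters of $B$: $|\mathcal L|=|S|q$, and every line has degree $|T|$ in $B$. This yields $\bar d\ge |T|/2$.

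\textbf{Comparison.} Your version is shorter, self-contained, and gives an explicit constant (asymptotically about $q^4/27$). The paper's pruning step is only sketched. For instance, one must check that $\Omega(q^2)$ points survive, which holds because every point has degree exactly $|S|$ in $B$. The advantage of the paper's min-degree skeleton is reuse: it is applied again in Subsection~\ref{subsec:cycles-k2} to count simple paths of length $2r$. There, a Jensen bound on line degrees does not directly extend. One would need a walk-counting inequality together with a separate argument discarding non-simple walks.

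\textbf{A small inaccuracy.} In your closing remark, an ordered path $pLp'$ and its reversal $p'Lp$ share the endpoint pair $\{p,p'\}$. So $C_4$-freeness gives at most two ordered paths per closing edge, not one. This is exactly the $O(1)$ used in the next claim, and it does not affect your proof of the statement.
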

\begin{poc}
We can clean up $G_B$ by repeatedly removing vertices of very low degree. This leaves a dense subgraph $G_B'$ that still has $\Omega(q^3)$ edges and a minimum degree proportional to $q$. In $G_B'$, starting from any point $p$, we can find a line $L$ and then another point $p'$ on $L$. This forms a path of length two, corresponding to an open triangle. Since the minimum degree is high, $G_B'$ contains $\Omega(q^4)$ such ordered paths. Let $\mathcal{P}$ be the set of these paths.
\end{poc}

For any path $\Pi = p L p' \in \mathcal{P}$ with $p=(x,y)$ and $p'=(x',y')$, we define a specific value 
\[
\theta(\Pi) = y + y' - x x'.
\]
By definition, the edge between the points $p$ and $p'$ belongs exactly to the graph $F_{\theta(\Pi)}$. Adding this edge to $\Pi$ creates a triangle.

\begin{claim}
There exists a choice of $\theta \in \Fq$ such that the unclaimed edges of $F_\theta$ can close $\Omega(q^3)$ paths from $\mathcal{P}$.
\end{claim}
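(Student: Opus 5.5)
Pigeonhole over the $q$ values of $\theta$, then account for Blocker's edges. Each path $\Pi=pLp'\in\mathcal P$ is closed by the edge $pp'$, which lies in exactly one graph $F_{\theta(\Pi)}$. Group the paths in $\mathcal P$ by their value $\theta(\Pi)$. Since $|\mathcal P|=\Omega(q^4)$ and there are $q$ values, the average class has $\Omega(q^3)$ paths.

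The main obstacle is Blocker: during the phase on $B$ he has spent moves outside $B$, possibly on point--point edges, killing closing edges. So I count, for each $\theta$, only paths whose closing edge is still unclaimed. Constructor spent $O(q^3)$ moves in the first phase, hence Blocker claimed at most $O(q^3)$ point--point edges in total. I then bound how many paths a single point--point edge $pp'$ can ruin. Since $B$ is $C_4$-free (Lemma~\ref{lem:incidence-C4-free}), two distinct points lie on at most one common line. So the edge $pp'$ closes at most two ordered paths of $\mathcal P$, namely $pLp'$ and $p'Lp$. Blocker's edges therefore destroy at most $O(q^3)$ paths. To dominate this, I need the constant in $|\mathcal P|=\Omega(q^4)$ to beat $q$ times the constant of Blocker's edge count. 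Since $|\mathcal P|$ has order $q^4$ while the destroyed paths are $O(q^3)$, the ratio is $\Omega(q)$ and the constants do not matter for large $q$. At least $\Omega(q^4)-O(q^3)=\Omega(q^4)$ paths remain with unclaimed closing edges, and averaging over $\theta$ gives some $\theta$ with $\Omega(q^3)$ such paths.

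A subtle point is that distinct paths in the class might share a closing edge. This is harmless for counting closable paths, since each closing edge closes at most two ordered paths. So the number of distinct unclaimed edges of $F_\theta$ closing paths in the class is also $\Omega(q^3)$, which is what the subsequent phase needs: Constructor claims these edges, each claim is legal by Lemma~\ref{lem:polarity-C4-free} since her graph stays inside $B\cup F_\theta$, and she gets at least half of them. I would also check that the closing edge $pp'$ genuinely joins distinct points, which holds because $p\neq p'$ in a path. Finally, $p$ and $p'$ have $x,x'\in T$, so the edge does lie in $F_\theta$ as defined on $P$, and no degenerate self-loop case arises.
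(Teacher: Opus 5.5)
Your proposal is correct and follows essentially the same argument as the paper. Blocker holds $O(q^3)$ point--point edges. Since two points share at most one line, each such edge closes $O(1)$ paths (at most two ordered ones). Pigeonholing the surviving $\Omega(q^4)$ paths over the $q$ values of $\theta$ then gives the claim, and your remark that these paths yield $\Omega(q^3)$ distinct unclaimed edges of $F_\theta$ simply makes explicit what the paper's weighting step leaves implicit.
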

\begin{poc}
During the first phase, Blocker claimed at most $O(q^3)$ edges between points. A single edge between points can act as the closing edge for at most $O(1)$ paths in $\mathcal{P}$. This is because if $p$ and $p'$ are fixed, they can share at most one line in $B$. Thus, Blocker has ruined at most $O(q^3)$ paths.

The total number of available paths in $\mathcal{P}$ is still $\Omega(q^4)$. Since there are $q$ possible values for $\theta$, the pigeonhole principle implies there must exist a specific $\theta \in \Fq$ such that the unclaimed edges of $F_\theta$ can close at least $\Omega(q^3)$ paths.
\end{poc}

Constructor fixes this $\theta$. She assigns each available edge in $F_\theta$ a weight equal to the number of paths it can close. She then claims these candidate edges, always prioritizing the one with the maximum remaining weight. By Lemma \ref{lem:polarity-C4-free}, claiming these edges is  legal because the union graph $B \cup F_\theta$ is $C_4$-free.

Blocker can take at most one candidate edge per round. Since Constructor always takes the heaviest available edge, she secures at least half of the total weight. This means she completes $\Omega(q^3)$ triangles. Since $q$ is proportional to $\sqrt{n}$, the number of completed triangles is
\[
\Omega(q^3) = \Omega(n^{3/2}),
\]
as required.
\end{proof}

Since the general upper bound is established at the beginning of this section, Propositions \ref{prop:lower-k-3} and \ref{prop:lower-k-2} together complete the proof of Theorem \ref{thm:cliques}.

\medskip

\section{Counting cycles} \label{sec:cycles}

In this section, we study the game score when both the target and forbidden graphs are cycles. Specifically, we prove Theorem~\ref{thm:cycles} by determining the order of $g(n, C_t, C_{2k})$ for any $4 \le t$ with $t\neq 2k$. 

The upper bound follows directly from the static generalized extremal number. Since Constructor can never form a $C_{2k}$, her final graph must be $C_{2k}$-free. By the cycle-versus-cycle extremal theorem of Gerbner-Gy\H{o}ri-Methuku-Vizer \cite{GGMV20} and Gishboliner-Shapira \cite{GS20}, we have the  upper bound
\[
g(n, C_t, C_{2k}) \le \ex(n, C_t, C_{2k}) \le \begin{cases}
O_{t,k}\bigl(n^{t/2}\bigr), & \text{if } k=2,\\[2mm]
O_{t,k}\bigl(n^{\lfloor t/2\rfloor}\bigr), & \text{if } k\ge 3.
\end{cases}.
\]

To prove the matching lower bounds, we mirror the structure of the previous section. We split the analysis into two subsections. For longer even cycles where $k \ge 3$, we design a template graph. For the $4$-cycle where $k=2$, we extend our finite field algebraic construction.

\subsection{The lower bound for longer even cycles ($k \ge 3$)} \label{subsec:cycles-k3}

The goal of this subsection is to establish the lower bound for all $k \ge 3$. 

\begin{proposition} \label{prop:cycles-k-3}
Let $k \ge 3$ and $t \ge 4$, $t \neq 2k$, then the game score satisfies
\[
g(n,C_t,C_{2k})
\ge
\Omega_{t,k}\bigl(n^{\lfloor t/2\rfloor}\bigr).
\]
\end{proposition}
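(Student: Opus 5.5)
We build a partial blow-up of $C_t$, as sketched in the introduction. Write $s=\lfloor t/2\rfloor$. Take core vertices $v_1,\dots,v_t$ arranged cyclically. For $s$ pairwise non-consecutive indices $i_1,\dots,i_s$, we replace the core vertex $v_{i_j}$ by a set $W_j$ of $\Theta(n)$ vertices, each joined to both cyclic neighbours of $v_{i_j}$. For $t$ even these are the even positions. For $t$ odd we use positions $2,4,\dots,2s$, and the two odd cores $v_1$ and $v_t$ are adjacent to each other. Call the resulting template $\mathcal{T}$. Any choice of one vertex from each $W_j$ gives a copy of $C_t$, so $\mathcal{T}$ contains $\prod_j |W_j|=\Theta(n^{s})$ copies of $C_t$.

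The structural lemma is that every cycle in $\mathcal{T}$, and hence in any subgraph of it, has length $4$ or $t$.

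To prove it, we use that each vertex of $W_j$ has degree $2$ and its two neighbours are the core vertices $u_j,u'_j$ flanking position $i_j$. A cycle in $\mathcal{T}$ therefore projects to a closed walk in the cycle $C_t$ (with $W_j$ collapsed to a single vertex). Either the cycle uses two vertices of some $W_j$ between the same pair $u_j,u'_j$ and closes up there, which gives a $C_4$. Or it passes through each "slot" at most once. In the second case the cycle must traverse the whole underlying $C_t$ once, giving length exactly $t$. One must check there are no other ways to return. Between $u_j$ and $u'_j$ the only routes are through $W_j$ or around the rest of the cycle, so any cycle either lives in $\{u_j,u'_j\}\cup W_j$ or winds around once. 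This is a short case analysis like the leaf argument in Lemma~\ref{lem:forest-duplication}. Since $t\ne 2k$ and $2k\ge 6>4$, every subgraph of $\mathcal{T}$ is $C_{2k}$-free. So every move Constructor makes inside $\mathcal{T}$ is legal, and we never need to track the game history for safety.

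Game strategy. First, Constructor picks fresh core vertices one at a time, each time choosing a fresh vertex. A constant number of such choices fixes $u_j,u'_j$ for all $j$, and she claims any core–core edges required; for $t$ odd this is the edge $v_1v_t$. Blocker has made only $O_t(1)$ moves so far, so these edges can be taken before Blocker touches them. To be safe, she claims each such edge immediately after choosing its second endpoint to be fresh. Next, for each $j$ she grows $W_j$ as follows. She picks a vertex $w$ that is fresh and whose edges to $u_j$ and $u'_j$ are both unclaimed. She claims $wu_j$. If Blocker then takes $wu'_j$, the vertex $w$ is wasted; otherwise she claims $wu'_j$ on her next turn. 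Each successful step costs a constant number of rounds, and each Blocker move can spoil at most one candidate vertex.

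For the counting, split the $n-O(1)$ non-core vertices into $s$ reserved pools of size about $n/(2s)$, one per index $j$. Blocker's moves incident to a pool can kill at most one pool vertex per move. In the worst case Blocker destroys one candidate for each one Constructor completes, so each $W_j$ ends with at least about a $1/4$ fraction of its pool, which is $\Omega_{t}(n)$. More precisely, Constructor interleaves the pools round-robin so that each pool receives a constant fraction of her attention, and Blocker cannot concentrate enough to starve any single pool below linear size. The product bound then gives $\Omega_{t,k}(n^{s})$ copies of $C_t$.

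The main obstacle is the bookkeeping in this final step. Blocker may preemptively claim edges $wu_j$ for many $w$ in one pool. Since each Blocker edge kills at most one vertex per pool, and the game lasts $\Theta(n)$ rounds, a cruder fix is also available: let every pool have size $n/(4s)$ and note that Blocker kills at most as many vertices as Constructor has used. That keeps each $|W_j|\ge n/(16s)$. The cycle-length lemma is conceptually easy but needs a careful statement covering the odd-$t$ case, where the odd cores $v_1,v_t$ are adjacent.
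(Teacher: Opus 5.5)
Your template and cycle-length lemma are the paper's ($\mathcal{H}_t$ and Lemma~\ref{lem:template-safe}), so safety is fine. The gap is in the game strategy, and it is fatal as written.

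You fix all core vertices at the start and then grow $W_j$ by claiming $wu_j$ and afterwards $wu'_j$. Blocker simply answers each claim of $wu_j$ with $wu'_j$, so every candidate is wasted and no vertex ever joins $W_j$. Your counting does not rule this out. It is true that each Blocker move spoils at most one candidate, but Constructor also spends a move on each candidate, and the candidates Blocker spoils are exactly the ones she has started. So ``Blocker kills at most as many vertices as Constructor has used'' is consistent with zero successes.

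No pool sizes or round-robin scheduling can repair this once both endpoints of a block are fixed. Blocker can pick one block $j$ and play the pairing strategy on the disjoint pairs $\{wu_j,wu'_j\}$: whenever Constructor claims one edge of a pair, he claims the other. This keeps $W_j$ empty, so $\prod_j|W_j|=0$.

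The missing idea is the order in which core vertices are chosen. In the paper, Constructor first takes $4s$ fresh vertices and claims the edge from the existing core $u_{i-1}$ to each one at the moment it is chosen, so Blocker cannot preempt these edges. Only then does she choose the next core $u_i$ as a fresh vertex. Since $u_i$ is fresh, all its edges to the star are unclaimed, Blocker can destroy at most one per round, and Constructor secures $s$ common neighbours.

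So the cores must be introduced one at a time, interleaved with the blocks. To close the cycle:
\begin{itemize}
\item for even $t$, the paper builds both stars adjacent to the last core (from $u_{r-2}$ and from $u_0$) before choosing $u_{r-1}$ fresh;
\item for odd $t$, it claims $u_ru_0$ immediately after choosing $u_r$ fresh, then connects $u_r$ to its prepared star.
\end{itemize}
With this ordering your template argument goes through.
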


To achieve this bound, Constructor plays inside a template graph that is essentially a partial blow-up of the target cycle. We fix a small set of core vertices and build linear-size common neighbourhoods between consecutive core vertices. Choosing one vertex from each common neighbourhood will yield a distinct copy of the target cycle. 

We formalize this idea by defining two families of template graphs, depending on the parity of $t$.

\begin{definition}[Template graphs]
We define the template graphs $\mathcal{H}_t$ as follows. Let $r = \lfloor t/2 \rfloor$.
\begin{enumerate}
    \item If $t = 2r+1$, the graph has core vertices $\{u_0, u_1, \dots, u_r\}$ and disjoint auxiliary sets $A_1, \dots, A_r$. The edges are $u_{i-1}a$ and $u_i a$ for all $a \in A_i$ where $1 \le i \le r$, along with a single core edge $u_r u_0$.
    \item If $t = 2r$, the graph has core vertices $\{u_0, u_1, \dots, u_{r-1}\}$ and disjoint auxiliary sets $A_1, \dots, A_r$. The edges are $u_{i-1}a$ and $u_i a$ for all $a \in A_i$ where $1 \le i \le r-1$, along with the edges $u_{r-1}a$ and $u_0 a$ for all $a \in A_r$.
\end{enumerate}
\end{definition}

Before deploying this template, we must ensure it is safe from the forbidden cycle.

\begin{lemma} \label{lem:template-safe}
Any subgraph of a template graph from $\mathcal{H}_t$ contains only cycles of length $4$ or exactly $t$. Consequently, if $k \ge 3$ and $t \neq 2k$, every such template is  $C_{2k}$-free.
\end{lemma}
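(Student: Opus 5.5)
The plan is to exploit the fact that every auxiliary vertex has degree exactly $2$ in a template graph. By the definition, $a\in A_i$ is adjacent only to $u_{i-1}$ and $u_i$, where for $i=r$ in the even case we read $u_r$ as $u_0$. Any cycle $C$ in a subgraph of a template graph that visits $a$ must therefore enter and leave $a$ through these two core vertices. So we may contract each such passage $u_{i-1}\,a\,u_i$ to a single edge between $u_{i-1}$ and $u_i$, labelled by $a$. Since $t\ge 4$ we have $r\ge 2$.

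This contraction turns $C$ into a cycle in an auxiliary multigraph $M$ on the core vertices, and the core vertices on it remain distinct. The multigraph $M$ is as follows.
\begin{enumerate}
\item If $t=2r+1$, its underlying simple graph is the cycle $u_0u_1\cdots u_ru_0$. Each pair $u_{i-1}u_i$ with $1\le i\le r$ carries one parallel edge per element of $A_i$, and $u_ru_0$ carries the single core edge. The core edge is not parallel to any auxiliary class, because $r\ge 2$ means $u_r u_0$ is not of the form $u_{i-1}u_i$.
\item If $t=2r$, its underlying simple graph is the cycle $u_0\cdots u_{r-1}u_0$, where class $A_i$ sits on the pair $u_{i-1}u_i$ with indices modulo $r$. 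When $r=2$ both classes $A_1$ and $A_2$ lie on the same pair $\{u_0,u_1\}$.
\end{enumerate}
The length of the original cycle is recovered as twice the number of auxiliary edges used, plus the number of core edges used.

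The key step is to classify the cycles of a multigraph whose underlying simple graph is a cycle $Z$ (or, for $r=2$ even, a single pair). Such a cycle either uses exactly two parallel edges between one pair, giving a $2$-cycle, or it traverses every edge of $Z$ once, giving the full cycle. Indeed, a cycle of length at least $3$ in $M$ projects to a cycle in the simple graph $Z$, and the only such cycle is $Z$ itself.

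The two types lift back as follows.
\begin{enumerate}
\item A $2$-cycle must use two auxiliary edges $a,a'$, since the core edge has no parallel partner. It lifts to the $4$-cycle $u_{i-1}\,a\,u_i\,a'$. This covers the $r=2$ even case with $a\in A_1$, $a'\in A_2$ as well.
\item A full traversal in the odd case uses $r$ auxiliary passages and one core edge, so it has length $2r+1=t$. In the even case it uses $r$ auxiliary passages, so it has length $2r=t$.
\end{enumerate}
Hence every cycle has length $4$ or $t$. Passing to a subgraph only removes cycles, so the conclusion holds for every subgraph of a template graph.

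For the consequence, $k\ge 3$ gives $2k\ge 6$, so $2k\neq 4$, and $2k\neq t$ by hypothesis. Thus no subgraph of a template contains $C_{2k}$.

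The main obstacle is the bookkeeping in the contraction step. We must check that distinct core vertices in $C$ give a genuine cycle in $M$, not merely a closed walk. We must also handle the degenerate small cases separately: $r=2$ in the even case, where both auxiliary classes are parallel, and the requirement that the odd-case core edge is not parallel to an auxiliary class. Both are settled by the assumption $t\ge 4$.
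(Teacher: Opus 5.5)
Your proof is correct and uses the same dichotomy as the paper: a cycle confined to one auxiliary class and its two core neighbours is a $4$-cycle, and every other cycle goes once around the whole core cycle. Contracting the degree-$2$ auxiliary vertices to parallel edges of a multigraph on the core vertices turns the paper's brief ``forced to go all the way around'' step into a rigorous argument, and it also makes explicit the degenerate checks the paper leaves implicit: the even case $r=2$, and the fact that the core edge $u_ru_0$ is not parallel to any auxiliary class.
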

\begin{proof}
Any local cycle formed entirely within an auxiliary set $A_i$ and its two core neighbours $u_{i-1}$ and $u_i$ must be a $4$-cycle. Any cycle that is not strictly local must traverse through different auxiliary sets. Because of the tree-like structure connecting the sets, such a cycle is forced to go all the way around the core vertices, completing the full macroscopic cycle. By definition, this full cycle has length $t$. Since $2k \ge 6$, the forbidden length is not 4. Since we strictly assume $t \neq 2k$, the template cannot contain any $C_{2k}$.
\end{proof}

With the safety of the template guaranteed, we now provide the formal proof of Proposition \ref{prop:cycles-k-3}.

\begin{proof}[Proof of Proposition \ref{prop:cycles-k-3}]
We set the block size 
\[
s = \lfloor n/(300t) \rfloor.
\]
The entire strategy consumes at most $10rs<n/2$ vertices, so there is always an abundant supply of fresh vertices.

\begin{claim} \label{claim:build-block}
Suppose $a$ is a previously chosen core vertex. Constructor can securely choose a fresh core vertex $b$ and construct an auxiliary set $W$ of size $s$ such that she owns the edges $aw$ and $bw$ for all $w \in W$.
\end{claim}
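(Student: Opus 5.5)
Constructor builds the block with a fresh core vertex $b$ and a pool of fresh candidate vertices. She then claims pairs of edges into the pool, always picking a candidate that Blocker has not touched.

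\textbf{Setup.} Constructor picks a fresh vertex $b$ and a set $U$ of $3s$ fresh vertices. This is possible since the whole strategy uses at most $10rs<n/2$ vertices. She will add vertices to $W$ one at a time.

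\textbf{Adding one vertex.} To add a vertex, she chooses some $w\in U\setminus W$ that is still fresh with respect to Blocker, meaning Blocker has claimed neither $aw$ nor $bw$. She claims $aw$ on one move and $bw$ on her next move.

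\textbf{Blocker's interference.} Blocker may respond to her claim of $aw$ by taking $bw$. To guard against this, she uses a pair-up trick: at her first move in a round she claims $aw$, and if Blocker answers with $bw$ she discards $w$. Each discarded candidate costs Blocker one move that was spent on it. Any Blocker move elsewhere spoils at most one candidate. Every round of two Constructor moves therefore consumes at most $O(1)$ candidates while giving either a completed $w$ or a wasted Blocker move.

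\textbf{Counting.} A cleaner variant is the following. She first claims $aw$ for $2s$ fresh candidates. Blocker's responses block $bw$ for at most $2s$ of them, but he also needs moves to do so. An accounting with a pool of size $\sim 4s$ leaves $\ge s$ candidates $w$ with $aw$ owned and $bw$ unclaimed. She then claims the $bw$ edges one by one, and Blocker kills at most one per round, so starting from $2s$ such survivors she keeps $\ge s$. Concretely, I would take the pool $U$ of size $6s$. Phase 1 claims $aw$ for $w\in U$ until $|U|$ edges are claimed by someone. Blocker takes at most half of them, and may spend some moves on edges $bw$. If Blocker spends $\beta$ moves on $b$-edges, Constructor owns at least $3s+\lceil\beta/2\rceil$ edges $aw$, where the $\lceil\beta/2\rceil$ term counts the $aw$ edges Blocker did not contest because he was busy. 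At most $\beta$ of these $w$ have $bw$ blocked, leaving $\ge 2s$. Phase 2 claims $bw$ among these, and she keeps at least $s$.

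\textbf{Legality.} Every claimed edge lies in the template graph together with pendant edges. Cycles in her graph are then $4$-cycles through $a,b$, or arise in the final template closure. By Lemma~\ref{lem:template-safe} these are safe since $2k\neq 4$ and $2k\neq t$.

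\textbf{Main obstacle.} The hardest step is this bookkeeping, showing Blocker cannot block the $b$-edges faster than Constructor creates candidates. The pool-size margin handles it.
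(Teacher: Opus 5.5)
There is a genuine gap: you fix the core vertex $b$ (and the pool $U$) before any $a$-edges are claimed. Blocker can then answer every claim of $aw$ by claiming $bw$. Since Constructor's strategy must succeed against every Blocker strategy, you cannot rely on Blocker not knowing which vertex is $b$.

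Against this mirroring strategy neither of your schemes produces a single good vertex.
\begin{itemize}
\item In the pair-up scheme, every round discards a candidate and nothing is ever completed. A ``wasted'' Blocker move is exactly the move that kills the candidate, so it buys you nothing.
\item In the concrete two-phase variant, your own accounting gives $c\ge 3s+\beta/2$ owned $a$-edges. Up to $\beta$ of these have their $b$-edge blocked, so you are only guaranteed $3s-\beta/2$ survivors. Nothing bounds $\beta$ by $2s$: Blocker may spend all of his (up to $6s$) Phase~1 moves on $b$-edges, giving $\beta=6s$ and zero survivors.
\end{itemize}
Enlarging the pool does not help, because $\beta$ grows with the length of Phase~1.

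The missing idea is to postpone the choice of $b$, as the paper does.
\begin{itemize}
\item Pick the pool vertices one at a time, each fresh at the moment it is picked, and immediately claim $ax$. This gives you all $4s$ edges $ax$, not just half.
\item Only afterwards choose $b$ among the currently fresh vertices. Then every edge from $b$ to the pool is still unclaimed.
\item In the second phase every Constructor move is a successful claim, and Blocker can destroy at most one candidate edge per round. After $s$ claims at most $s$ of the $4s$ candidates have been lost, so you obtain $W$ of size $s$.
\end{itemize}
Your legality paragraph is fine, but it is not where the difficulty lies.
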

\begin{poc}
Constructor first selects $4s$  fresh vertices. We call this set $A$. Whenever she selects a fresh vertex $x \in A$, she immediately claims the edge $ax$. This is  legal because $x$ is fresh. 

She then selects another  fresh vertex $b$. At this moment, every edge between $b$ and $A$ is unclaimed. Constructor repeatedly claims edges from $b$ to $A$ until she successfully acquires $s$ such edges. Since she only needs $s$ edges and Blocker can take at most one edge per round, Blocker can ruin at most $s$ candidate edges. Thus, $4s$ initial vertices provide sufficiently many vertices to guarantee $s$ successful connections. We define $W \subseteq A$ as the set of these successful connections.
\end{poc}

\noindent
\textbf{Odd target length.}
Assume
\(
        t=2r+1.
\)
Constructor starts by choosing a fresh vertex $u_0$.

For $i=1,2,\dots,r-1$, Constructor applies the ordinary block claim with
\(
        a=u_{i-1}.
\)
This gives a fresh new core vertex $u_i$ and a set $W_i$ of size $s$
such that Constructor owns all edges
\(
        u_{i-1}w,\ u_iw
\) where $w\in W_i$.

For the final block, Constructor first chooses $4s$ fresh vertices and claims all edges from $u_{r-1}$ to them; call this set $A_r$.  Then Constructor chooses a fresh vertex $u_r$ and immediately claims the edge
\(
        u_ru_0.
\)
  After Blocker's next move, Constructor starts claiming edges from $u_r$ to $A_r$ until she has obtained $s$ such edges.
 Let $W_r\subseteq A_r$ be the set of $s$ vertices to which Constructor successfully connects $u_r$.

At this point, for every $1\le i\le r$, Constructor has a set $W_i$ of size $s$, and for every $w\in W_i$ she owns the two edges from $w$ to the corresponding consecutive core vertices.  She also owns $u_ru_0$.  Therefore, for every tuple
\(
        (w_1,w_2,\dots,w_r)\in W_1\times W_2\times\cdots\times W_r,
\)
Constructor's graph contains the cycle
\(
        u_0w_1u_1w_2u_2\cdots u_{r-1}w_ru_ru_0.
\)
Hence Constructor has created at least
\(s^r=\Omega_t(n^r)
\)
copies of $C_{2r+1}=C_t$.

All edges claimed by Constructor during this strategy are contained in a graph of the form $\mathcal H_{2r+1}$ from the lemma, except that some vertices in the preparation sets $A_i\setminus W_i$ may have only one of their two possible incident template edges.  Equivalently, Constructor's graph is a subgraph of such a template.  By the lemma the template is $C_{2k}$-free, and every subgraph of a $C_{2k}$-free graph is $C_{2k}$-free.  Thus every move above is legal.

\medskip
\noindent
\textbf{Even target length.}
Assume
\(
        t=2r.
\)
Constructor starts by choosing a fresh vertex $u_0$.

If $r\ge 3$, then for $i=1,2,\dots,r-2$, Constructor applies the ordinary block claim with
\(
        a=u_{i-1}.
\)
This gives a fresh new core vertex $u_i$ and a set $W_i$ with $|W_i|=s$ such that Constructor owns all edges
\(
        u_{i-1}w,\ u_iw
\) where $w\in W_i$.
If $r=2$, this preliminary part is empty.

It remains to build the last two blocks, namely the block between $u_{r-2}$ and $u_{r-1}$ and the block between $u_{r-1}$ and $u_0$.  The two old endpoints $u_{r-2}$ and $u_0$ are already fixed, so Constructor prepares both stars before choosing the new core vertex $u_{r-1}$.

First, Constructor chooses $4s$ fresh vertices and claims all edges from $u_{r-2}$ to them.  Call this set $A_{r-1}$.  In the case $r=2$, this means $u_{r-2}=u_0$.

Second, Constructor chooses another $4s$ fresh vertices and claims all edges from $u_0$ to them.  Call this set $A_r$.  The sets $A_{r-1}$ and $A_r$ are disjoint because all vertices are chosen fresh.

Third, Constructor chooses a fresh vertex $u_{r-1}$.  Since $u_{r-1}$ is fresh, all edges from $u_{r-1}$ to
\(
        A_{r-1}\cup A_r
\)
are unclaimed at that moment.

Constructor now claims $s$ edges from $u_{r-1}$ to $A_{r-1}$.  The same counting argument as in the ordinary block shows that this is possible.  Let $W_{r-1}\subseteq A_{r-1}$ be the set of endpoints obtained; then
\(
        |W_{r-1}|=s.
\)

Constructor then claims $s$ edges from $u_{r-1}$ to $A_r$.  Let $W_r\subseteq A_r$ be the set of endpoints obtained; then
\(
        |W_r|=s.
\)

Now Constructor owns, for each $1\le i\le r-1$, all edges
\(
        u_{i-1}w,\ u_iw
\) where $w\in W_i$,
and she also owns all edges
\(
        u_{r-1}w,\ u_0w
\) where $W\in W_r$.
Therefore, for every tuple
\(
        (w_1,w_2,\dots,w_r)\in W_1\times W_2\times\cdots\times W_r,
\)
Constructor's graph contains the cycle
\(
        u_0w_1u_1w_2u_2\cdots u_{r-1}w_ru_0.
\)
  Hence Constructor has created at least \(
        s^r=\Omega_t(n^r)
\)
copies of $C_{2r}=C_t$.

As in the odd case, Constructor's graph throughout the strategy is a subgraph of a graph of the form $\mathcal H_{2r}$ from the lemma, possibly with some auxiliary vertices having only one of their two possible template edges.  The lemma says that $\mathcal H_{2r}$ is $C_{2k}$-free.  Hence Constructor never creates a $C_{2k}$, so every move is legal.

\medskip
Combining the odd and even strategies, Constructor can force at least
\(\Omega_t(n^r)=\Omega_t\bigl(n^{\lfloor t/2\rfloor}\bigr)
\)
copies of $C_t$ while keeping her graph $C_{2k}$-free. 
\end{proof}

\begin{remark}
Just like the vertex duplication strategy from the previous section, the partial blow-up template relies heavily on local $4$-cycles. Therefore, this structural approach is not applicable when the forbidden graph is $C_4$.
\end{remark}

\medskip

\subsection{The lower bound for the $4$-cycle ($k=2$)} \label{subsec:cycles-k2}

In this subsection, we establish the matching lower bound for the $4$-cycle by expanding our algebraic approach.

\begin{proposition} \label{prop:cycles-k-2}
For any integer $t \ge 5$, the game score for the $4$-cycle satisfies
\[
g(n, C_t, C_4) \ge \Omega_t\bigl(n^{ t/2 }\bigr).
\]
\end{proposition}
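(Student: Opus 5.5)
Constructor plays on $q^2\le n$ vertices with $q$ an odd prime, $q=\Theta(\sqrt n)$, and works inside the $C_4$-free union $B\cup F_\theta$ of Lemma~\ref{lem:polarity-C4-free}. By that lemma, every move inside this union is automatically legal, so the whole task is counting. The plan mirrors the proof of Proposition~\ref{prop:lower-k-2}.

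\textbf{Phase I: a dense part of $B$.} Constructor claims edges of $B$ until every edge of $B$ is claimed, obtaining $G_B$ with $e(G_B)=\Omega(q^3)$. After deleting low-degree vertices, she has a subgraph $G_B'$ of minimum degree $\Omega(q)$.

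\textbf{Counting open paths.} In $G_B'$ she counts alternating walks $p_0L_1p_1L_2\cdots L_mp_m$ with distinct vertices. These are paths of length $2m$ in the incidence graph, and $G_B'$ contains $\Omega(q^{2+2m})$ of them. Write $t=2m+1$ (odd) or $t=2m+2$ (even).
\begin{itemize}
\item For odd $t$, closing the path needs a single point--point edge $p_0p_m$, giving a $C_{2m+1}$. There are $\Omega(q^{2m+2})=\Omega(n^{m+1})$ such paths, while $n^{t/2}=n^{m+1/2}$.
\item For even $t$, a path $p_0L_1\cdots p_{m}$ (length $2m$) is closed by a point $p'$ adjacent to $p_0$ and $p_m$ in $F_\theta$. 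Since $F_\theta$-neighbours of $p_0$ lie on a line, $p'$ is determined by at most one choice. Equivalently, and more cleanly, one can use two $F_\theta$ edges. The simpler option is to use paths of length $t-1$ in $B$ when $t-1$ is even, and otherwise use paths of length $t-2$ in $B$ plus one $F_\theta$ edge already claimed.
\end{itemize}

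\textbf{Choosing $\theta$ and the closing edges.} Each closing pair $(p_0,p_m)$ determines a unique $\theta=y_0+y_m-x_0x_m$. Blocker has claimed only $O(q^3)$ point--point edges, and each such edge ruins at most $O(q^{2m-2})$ paths with given endpoints. By pigeonhole over the $q$ values of $\theta$, some $\theta$ has unclaimed $F_\theta$ edges closing $\Omega(q^{2m+1})$ paths. Constructor then claims closing edges greedily by remaining weight and secures half the weight. This yields $\Omega(q^{2m+1})=\Omega(n^{t/2})$ copies of $C_t$ for odd $t$.

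\textbf{Even $t=2m+2$.} Here Constructor first claims a half-share of $F_\theta$ edges as a pre-phase, interleaved with the choice of $\theta$; this avoids needing a second closing edge. Concretely, the order is: choose $\theta$ arbitrarily before Phase I, claim half of $F_\theta$ (giving $\Omega(q^3)$ edges), then claim half of $B$, then use one more closing $F_\theta$-free edge via pigeonhole on a second... this becomes complicated. The alternative I would try first is to take paths consisting of $B$-segments and a single already-owned $F_\theta$ edge, closed by one unclaimed $F_\theta$ edge. Both $F_\theta$ edges lie in the same graph, so legality is preserved, and the count is $\Omega(q^{t-1})$ closures weighted suitably, giving $\Omega(q^{t})=\Omega(n^{t/2})$ after pigeonhole on $\theta$ (paths with $t-2$ $B$-edges contribute $q^{t}$, divided by $q$, giving $q^{t-1}$). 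I expect the main obstacle to be this counting: ensuring that each closing edge's weight is bounded, so that Blocker's earlier point--point edges destroy only a negligible fraction, and getting the exponent to exactly $t$ rather than $t-1$.
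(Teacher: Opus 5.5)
Your odd case $t=2m+1$ is correct and is exactly the paper's argument. You take $\Omega(q^{2m+2})$ point-to-point paths of length $2m$ in $G_B'$, with at most $O(q^{2m-2})$ per endpoint pair because the last line is forced by $C_4$-freeness. You then pigeonhole over $\theta$ and claim greedily inside $F_\theta$, which is legal by Lemma~\ref{lem:polarity-C4-free}.

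The genuine gap is the even case $t=2m+2\ge 6$, which the proposal never proves. You list several schemes: a common $F_\theta$-neighbour of the endpoints, pre-claiming half of $F_\theta$, and one owned plus one unclaimed $F_\theta$ edge. None of them is carried through. The count you do write down for the last one gives $q^{t-1}=n^{(t-1)/2}$ after the pigeonhole over $\theta$, which is a factor $\sqrt n$ short. The odd-case bookkeeping also does not transfer. Once a configuration contains an owned $F_\theta$ edge $p_mp'$, a single closing edge $p'p_0$ can lie in $O(q)\cdot O(q^{2m-2})=O(q^{2m-1})$ configurations. So the bound on what Blocker's $O(q^3)$ point--point edges destroy becomes $O(q^{2m+2})$. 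That is already as large as the total number of such configurations for a fixed $\theta$, which is at most $O(q^{2m+2})$ because two points have at most one common $F_\theta$-neighbour. Moreover, if the owned $F_\theta$ edges are claimed before Phase~I, $\theta$ is public and Blocker can aim his moves at $F_\theta$.

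The missing idea is that for even $t$ no closing edges are needed at all. Since $C_{2r}$ is bipartite, its copies can sit entirely inside Constructor's share of the incidence graph. After Phase~I, $G_B'$ has $N=q^2$ vertices and $\Omega(q^3)$ edges, and $q^3\gg N^{1+1/r}=q^{2+2/r}$ precisely because $r=t/2\ge 3$. Even-cycle supersaturation says an $N$-vertex graph with $m\gg N^{1+1/r}$ edges contains $\Omega_r\bigl((m/N)^{2r}\bigr)$ copies of $C_{2r}$. Applied to $G_B'$, it gives $\Omega_r(q^{2r})=\Omega_t(n^{t/2})$ copies of $C_t$. These copies are automatically legal, since $G_B'\subseteq B$ is $C_4$-free. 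This is how the paper handles the even case, and combined with your odd-case argument it completes the proof.
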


To prove this proposition, we reuse the finite field incidence graph $B$ defined in Section 2.2. Let $q$ be an odd prime power proportional to $\sqrt{n}$. Recall that $B$ is a bipartite graph on $q^2$ vertices that is strictly $C_4$-free. 

Before counting the cycles, Constructor will first build a dense skeleton within this geometry. We separate this initial phase into the following observation.

\begin{observation} \label{obs:dense-skeleton}
Constructor can securely build a $C_4$-free subgraph $G_B'$ on $q^2$ vertices with $\Omega(q^3)$ edges and a minimum degree proportional to $q$.
\end{observation}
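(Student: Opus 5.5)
Constructor plays only on the edges of the incidence graph $B$ of Lemma~\ref{lem:incidence-C4-free} and keeps doing so until every edge of $B$ has been claimed by one of the two players. Let $G_B$ denote the set of edges of $B$ that Constructor owns at that moment. Every move she makes is legal, because her graph at each stage is a subgraph of $B$, and $B$ is $C_4$-free by Lemma~\ref{lem:incidence-C4-free}. Constructor moves first, and in each round Blocker can take at most one edge of $B$. Hence $e(G_B)\ge e(B)/2$. Since $e(B)=|S||T|q\ge c_0q^3$ for some absolute constant $c_0>0$, we get $e(G_B)\ge \tfrac{c_0}{2}q^3$.

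Next we pass to a subgraph with large minimum degree by the standard peeling argument. Put $\delta=\tfrac{c_0}{8}q$. As long as the current graph has a vertex of degree less than $\delta$, we delete that vertex. The vertex set of $B$ has $|P|+|\mathcal L|=q^2$ elements, so this process removes at most $q^2\cdot\delta=\tfrac{c_0}{8}q^3$ edges. What remains is a graph $G_B'\subseteq G_B$ with $e(G_B')\ge \tfrac{3c_0}{8}q^3=\Omega(q^3)$, so in particular it is nonempty. Every vertex of $G_B'$ has degree at least $\delta=\Omega(q)$. We view $G_B'$ as a graph on the original $q^2$ vertices, where the deleted vertices become isolated, or alternatively restrict to its non-isolated part. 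In either reading, minimum degree proportional to $q$ refers to the vertices that actually carry the skeleton.

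The graph $G_B'$ is $C_4$-free because it is a subgraph of $B$. In addition, every degree in $B$ is at most $q$: a point lies on exactly $|S|\le q$ of the lines, and a line contains exactly $|T|\le q$ points. Together with the lower bound from peeling, every vertex of $G_B'$ has degree $\Theta(q)$. This upper bound will be useful later when counting paths.

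There is no real obstacle here. The only point needing care is the bookkeeping in the peeling step: the number of removed edges must be compared with the $\Omega(q^3)$ edges Constructor secured. This works because the total number of vertices is only $q^2$, so a degree threshold of order $q$ costs at most $O(q^3)$ edges, and choosing the constant in $\delta$ small enough keeps the loss below half of $e(G_B)$.
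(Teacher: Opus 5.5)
Your proof is correct and follows the paper's argument. You play only on $B$ until it is exhausted, which legally secures at least $e(B)/2=\Omega(q^3)$ edges since $B$ is $C_4$-free, and then peel off vertices of degree below a small multiple of $q$, losing at most $q^2\cdot\delta$ edges; the only difference is that you make the peeling constants explicit, which the paper leaves implicit.
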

\begin{proof}
Constructor first plays exclusively on the incidence graph $B$ until all its edges are claimed. Let $G_B$ be the graph of edges she successfully claims. Because she moves first, she claims at least half of the edges, giving 
\[
e(G_B) = \Omega(q^3).
\]
We then delete low-degree vertices from $G_B$ by repeatedly removing vertices of very low degree. This deletion process leaves a dense subgraph $G_B'$ that still has $\Omega(q^3)$ edges and maintains a minimum degree proportional to $q$. Since $G_B'$ is a subgraph of $B$, it is naturally $C_4$-free.
\end{proof}

With this dense skeleton established, we now complete the proof by analyzing the target cycles based on their parity.

\begin{proof}[Proof of Proposition \ref{prop:cycles-k-2}]
We divide the analysis into two cases.

\vspace{0.2cm}
\noindent\textbf{Case 1: Even target length ($t = 2r \ge 6$)}

For even target lengths, the bound follows immediately from classic supersaturation, requiring no additional algebraic edges. We consider the dense skeleton $G_B'$ obtained from Observation \ref{obs:dense-skeleton}. This graph has $q^2$ vertices and $\Omega(q^3)$ edges. 

By the even cycle supersaturation theorem of Simonovits, any $N$-vertex graph with $m \gg N^{1+1/r}$ edges must contain many copies of $C_{2r}$. We apply this theorem with $N = q^2$ and $m = \Omega(q^3)\gg N^{1+1/r}$ since $r\ge 3$. The number of copies of $C_{2r}$ is at least
\[
\Omega_r\left( \left(\frac{q^3}{q^2}\right)^{2r} \right) = \Omega_r(q^{2r}) = \Omega_r(n^r).
\]
Since $r = t/2$, this yields $\Omega_t(n^{t/2})$ copies of the target cycle. Because $G_B'$ is $C_4$-free, Constructor has already achieved her goal without claiming any further edges, and no further moves are needed.

\vspace{0.2cm}
\noindent\textbf{Case 2: Odd target length ($t = 2r+1 \ge 5$)}

Because the incidence graph $B$ is bipartite, the skeleton $G_B'$ contains no odd cycles. Therefore, Constructor must actively close paths to form odd cycles. We mirror the logic used to count triangles, but instead of closing paths of length two, we close paths of length $2r$.

\begin{claim}
The skeleton $G_B'$ contains a set $\mathcal{P}$ of $\Omega_r(q^{2r+2})$ ordered simple paths of length $2r$ that start and end at point vertices.
\end{claim}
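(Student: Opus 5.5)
We count the paths greedily, using the minimum-degree property of $G_B'$ from Observation~\ref{obs:dense-skeleton}. Let $\delta \ge cq$ be the minimum degree of $G_B'$, where $c>0$ is an absolute constant. Every vertex of $G_B'$ has degree at least $\delta$, and $G_B'$ is bipartite with parts contained in $P$ and $\cL$. So a path that starts at a point vertex alternates point, line, point, \dots, and after $2r$ steps it ends at a point vertex automatically. It therefore suffices to count ordered simple walks of length $2r$ with no repeated vertices that start at a point of $G_B'$.

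\textbf{Step 1: starting points.} Since $e(G_B') = \Omega(q^3)$ and every line has degree at most $q$ in $B$ (each line contains at most $q$ points), $G_B'$ contains $\Omega(q^2)$ lines. Counting edges from the point side, each point of $P$ has degree at most $|S| \le q$, so $G_B'$ also contains $\Omega(q^2)$ point vertices. We choose the starting point $p_0$ among these in $\Omega(q^2)$ ways.

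\textbf{Step 2: greedy extension.} We extend $p_0 L_1 p_1 L_2 \cdots$ one vertex at a time. At step $j$ (with $1\le j\le 2r$), the current endpoint has at least $\delta \ge cq$ neighbours in $G_B'$. At most $j-1 < 2r$ of them are already on the path, and since the graph is bipartite only vertices on the opposite side matter. Hence there are at least $cq - 2r \ge (c/2)q$ choices for the next vertex, provided $q$ is large compared with $r$. This guarantees that the path stays simple. Multiplying the choices gives at least
\[
\Omega(q^2)\cdot \bigl((c/2)q\bigr)^{2r} = \Omega_r\bigl(q^{2r+2}\bigr)
\]
ordered simple paths of length $2r$. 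Because of bipartiteness each one ends at a point vertex, and distinct choice sequences give distinct ordered paths.

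\textbf{Main obstacle.} The only delicate point is justifying the minimum degree $\delta = \Omega(q)$ claimed in Observation~\ref{obs:dense-skeleton}. The standard argument repeatedly deletes vertices of degree less than $e(G_B)/(2q^2)$. This removes fewer than $e(G_B)/2$ edges in total, so the remaining subgraph keeps $\Omega(q^3)$ edges and has minimum degree $\Omega(q^3/q^2) = \Omega(q)$. Once this is in place, the greedy count is routine. The requirement that $q$ be large relative to $r$ costs only constants, since small $n$ is absorbed into the $\Omega_r$ notation.
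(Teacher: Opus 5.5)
Your proof is correct and follows the same route as the paper: $\Omega(q^2)$ starting points multiplied by $\Omega(q)$ choices at each of the $2r$ greedy steps, using the minimum degree of $G_B'$. Two of your steps are ones the paper only asserts. Subtracting the at most $2r$ vertices already on the path at each step shows why self-intersecting walks are negligible. The degree bound $|S|\le q$ on points shows why there are $\Omega(q^2)$ starting points.
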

\begin{poc}
Starting from any initial point vertex in $G_B'$, we can walk alternatingly between lines and points. Because the minimum degree is proportional to $q$, we have $\Omega(q)$ valid choices at each of the $2r$ steps. The number of self-intersecting paths is negligible. Since there are $\Omega(q^2)$ starting vertices, the total number of ordered simple paths of length $2r$ is 
\[
\Omega(q^2 \cdot q^{2r}) = \Omega_r(q^{2r+2}),
\]
as required.
\end{poc}

For each path $\Pi \in \mathcal{P}$ starting at a point $p_0 = (x_0, y_0)$ and ending at a point $p_r = (x_r, y_r)$, we define its closing parameter
\[
\theta(\Pi) = y_0 + y_r - x_0 x_r.
\]
By our earlier geometric definitions from Section 2.2, the edge $p_0 p_r$ belongs exactly to the auxiliary graph $F_{\theta(\Pi)}$. Adding this single edge to the path $\Pi$ completes a cycle of length $2r+1$.

\begin{claim}
There exists a specific $\theta \in \Fq$ such that the unclaimed edges of $F_\theta$ can close at least $\Omega_r(q^{2r+1})$ paths from $\mathcal{P}$.
\end{claim}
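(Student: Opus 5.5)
We mirror the pigeonhole argument from the triangle case (the second claim in the proof of Proposition~\ref{prop:lower-k-2}). The plan is to bound the number of paths in $\mathcal{P}$ whose closing edge Blocker has already claimed, and then average over the $q$ possible values of $\theta$. At the end of the skeleton phase Blocker has made at most as many moves as Constructor, namely at most $e(B)=\Theta(q^3)$. In particular, he owns at most $O(q^3)$ point--point edges, and only such an edge can be the closing edge $p_0p_r$ of some path $\Pi\in\mathcal{P}$.

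The key step is to bound how many paths of $\mathcal{P}$ share a fixed closing pair $\{p_0,p_r\}$. We count ordered simple paths $p_0L_1p_1L_2\cdots L_rp_r$ in $G_B'\subseteq B$ with both endpoints fixed. Every vertex of $B$ has degree at most $q$: a point lies on $|S|\le q$ lines, and a line contains at most $|T|\le q$ points of $P$. We choose the intermediate vertices $L_1,p_1,\dots,L_{r-1},p_{r-1}$ greedily, which gives at most $q^{2r-2}$ choices. The last line $L_r$ must then contain both $p_{r-1}$ and $p_r$. If $p_{r-1}\ne p_r$, this line is unique, since two points share at most one line (Lemma~\ref{lem:incidence-C4-free}). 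Hence each pair is the closing edge of at most $2q^{2r-2}$ paths of $\mathcal{P}$, the factor $2$ accounting for orientation. So Blocker has destroyed at most $O(q^3)\cdot O(q^{2r-2})=O(q^{2r+1})$ paths.

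Since $|\mathcal{P}|=\Omega_r(q^{2r+2})$ and $r\ge 2$, we have $O(q^{2r+1})=o(q^{2r+2})$. Thus $\Omega_r(q^{2r+2})$ paths still have an unclaimed closing edge; note that Constructor owns no point--point edges yet. Every such path has its closing edge in exactly one graph $F_{\theta(\Pi)}$. By pigeonhole over the $q$ values of $\theta\in\Fq$, some $\theta$ satisfies the requirement that the unclaimed edges of $F_\theta$ close at least $\Omega_r(q^{2r+2})/q=\Omega_r(q^{2r+1})$ paths. One technicality has to be handled: a closing pair $p_0,p_r$ must be distinct and must not be joined in $B$, which is automatic because $B$ has no point--point edges. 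We also need $p_0\ne p_r$, and this holds since the paths in $\mathcal{P}$ are simple.

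The main obstacle is the multiplicity bound in the second paragraph. It has to give a factor of $q$ less than the naive $q^{2r-1}$ count for paths with one fixed endpoint, and the $C_4$-freeness (uniqueness of the line through two points) is exactly what supplies this saving. Without it the destroyed paths could be as many as $q^{2r+2}$ and the pigeonhole step would fail. Everything else is bookkeeping. We will be careful that the constant hidden in $\Omega_r(q^{2r+2})$ from the previous claim is not swamped; since the loss is a full factor of $q$ smaller, this holds for $q$ large.
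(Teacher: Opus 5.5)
Your proposal is correct and follows the paper's own argument. You bound Blocker's point--point edges by $O(q^3)$, use the uniqueness of the line through two points to show that each endpoint pair closes at most $O_r(q^{2r-2})$ paths, and then pigeonhole over the $q$ values of $\theta$; you also write out that last pigeonhole step, which the paper leaves implicit by analogy with the triangle case.
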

\begin{poc}
During the first phase of the game, Blocker claimed at most \(O(q^3)\)
edges between point vertices. We now bound how many paths can be ruined by
one fixed point--point edge. Fix two distinct points \(p_0,p_r\in P\).
Every ordered simple path of length \(2r\) in \(G_B'\) from \(p_0\) to
\(p_r\) has the form
\(
p_0,L_1,p_1,L_2,\dots,p_{r-1},L_r,p_r.
\)
Since \(G_B'\subseteq B\), every vertex has degree \(O(q)\). Hence the
prefix
\(
p_0,L_1,p_1,L_2,\dots,L_{r-1},p_{r-1}
\)
can be chosen in at most
\(
O_r(q^{2r-2})
\)
ways.

Once this prefix is fixed, the last line \(L_r\) must be a common
neighbour of the two points \(p_{r-1}\) and \(p_r\). Because the incidence
graph \(B\) is \(C_4\)-free, two distinct point vertices have at most one
common line. Since the path is simple, \(p_{r-1}\neq p_r\), and therefore
there is at most one possible choice for \(L_r\). Thus the number of
ordered simple paths from \(p_0\) to \(p_r\) is at most
\(
O_r(q^{2r-2}).
\)
The same bound holds if the orientation of the endpoint pair is reversed,
up to a factor of \(2\). Consequently, a single point--point edge can serve
as the closing edge for at most \(O_r(q^{2r-2})\) paths in \(\mathcal P\).
Therefore Blocker's point--point edges ruin at most
\[
O(q^3)\cdot O_r(q^{2r-2})=O_r(q^{2r+1})
\]
paths.
\end{poc}

Constructor fixes this specific $\theta$. She assigns each available edge in $F_\theta$ a weight equal to the number of paths it can close. She then claims these candidate edges by always selecting the heaviest available edge. As proven earlier in Lemma \ref{lem:polarity-C4-free}, the union graph $B \cup F_\theta$ is   $C_4$-free, so every move is legal. 

Blocker can take at most one candidate edge per round. Because Constructor prioritizes the heaviest edges, she secures at least half of the total weight. Therefore, she successfully completes
\[
\Omega_r(q^{2r+1}) = \Omega_r\bigl( n^{r + 1/2} \bigr)
\]
copies of $C_{2r+1}$. Since $t = 2r+1$, this exactly matches the required $\Omega_t(n^{ t/2 })$ bound, completing the proof. 
\end{proof}

\medskip

\section{Concluding remarks} \label{sec:concluding}

In this paper, we studied the Constructor--Blocker game when the forbidden graph is an even cycle. We determined the exact order of the game score when the target graph is a cycle of  length at least four. We also established general bounds when the target is a triangle, and these bounds determine the exact order for small forbidden even cycles.

For target cliques larger than a triangle, the game behaves differently. If the target clique is too large, Constructor cannot build even a single copy. Specifically, we prove in Appendix \ref{app:additional} that the game score is exactly zero for the target clique $K_{2k-1}$ when $k \ge 3$. 

However, there is a large gap in our understanding between the triangle target and the impossible large clique targets. The behaviour of the game score for general cliques remains unknown. We state this as our first open problem.

\begin{problem}
What is the correct order of the game score $g(n, K_r, C_{2k})$ for general values of  $k \ge 4$ and $3\le r\le 2k-2$?
\end{problem}

Beyond even cycles, exploring the game for other bipartite forbidden graphs is a natural next step. A prime candidate is the complete bipartite graph $K_{s,t}$. Currently, determining the exact order of the game score when forbidding $K_{s,t}$ appears very difficult. This difficulty mainly comes from the fact that the classical static extremal numbers for general complete bipartite graphs are still open. Without tight static lower bounds and flexible algebraic constructions, Constructor lacks a dense and safe environment to build the target graphs. As a result, the dynamic game naturally inherits the large gaps from traditional extremal graph theory. We leave the study of forbidding general bipartite graphs as a broad direction for future research.

\medskip

\section*{Acknowledgements}

LW was supported by the National Key R\&D Program of China under grant number 2024YFA1013900, the NSFC under grant number 12471327, the China Scholarship Council, and the Institute for Basic Science (IBS-R029-C4). ZY was supported by the Institute for Basic Science (IBS-R029-C4).


\medskip

\appendix

\section{Additional results} \label{app:additional}

In this appendix, we prove that the game score is exactly zero when the target is a large clique and the forbidden graph is an even cycle. Specifically, we prove the following theorem.

\begin{theorem} \label{thm:zero-score}
For every fixed $k\ge 3$,
\[
g(n,K_{2k-1},C_{2k})=0.
\]
\end{theorem}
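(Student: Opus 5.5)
The plan is to exhibit an explicit Blocker strategy that stops Constructor from ever completing a copy of $K_{2k-1}$. The basic observation is that Constructor completes a $K_{2k-1}$ only by claiming its last edge. So at the moment before that move, her graph contains a copy $Q$ of $K_{2k-1}^-$ ($K_{2k-1}$ minus one edge $e_Q$) with $e_Q$ unclaimed. Call such a pair $(Q,e_Q)$ a \emph{threat}. Blocker's rule is: after each Constructor move, if some threat exists, claim its missing edge; otherwise claim an arbitrary edge. By induction, it suffices to show that a single legal Constructor move creates at most one new threat, or more precisely at most one new missing edge. Then Blocker always neutralises it, and no $K_{2k-1}$ ever appears. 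Initially there are no threats.

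The key lemma I would prove is the following. Suppose Constructor claims $uv$, her new graph $G$ is $C_{2k}$-free, and $(Q_1,e_1)$ and $(Q_2,e_2)$ are two new threats with $e_1\neq e_2$. Then we derive a contradiction. Both $Q_1$ and $Q_2$ contain the new edge $uv$. The two vertex sets must differ, since a fixed set of $2k-1$ vertices spans at most one copy of $K_{2k-1}^-$ in $G$ once the missing edge is unclaimed by Constructor. Hence there is a vertex $w\in V(Q_2)\setminus V(Q_1)$, and $|V(Q_1)\cap V(Q_2)|\ge 2$.

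The structural tool is a path-length property. Because $2k-1\ge 5$, the graph $K_{2k-1}^-$ is "panconnected": for any two distinct vertices $a,b$ of $Q_1$ there is an $a$--$b$ path inside $Q_1$ of every length from $2$ up to $2k-2$. I would check this directly, which is easy since only one edge is missing. Consequently, if $P$ is any path of length $\ell$ with $2\le \ell\le 2k-2$ whose endpoints lie in $V(Q_1)$ and whose interior avoids $V(Q_1)$, we can close it with an internal path of length $2k-\ell$ and obtain a $C_{2k}$ in $G$, a contradiction. So it suffices to find such an outside path of length $2$ or $3$.

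Next I would construct that short outside path from $Q_2$. Inside $Q_2$, the vertex $w$ is adjacent to all vertices of $Q_2$ except at most one. If $w$ has two neighbours in $V(Q_1)\cap V(Q_2)$, we get a length-$2$ path $a\,w\,b$ with endpoints in $Q_1$ and interior outside $Q_1$. Otherwise $V(Q_1)\cap V(Q_2)=\{u,v\}$ (or a set where $w$ misses one vertex). In that case I would use a second vertex $w'\in V(Q_2)\setminus V(Q_1)$, which exists because $|Q_2\setminus Q_1|\ge 2k-3\ge 3$. Using that only one edge of $Q_2$ is absent, I would then find a path $a\,w\,w'\,b$ or $a\,w'\,b$ with distinct endpoints $a,b\in\{u,v\}$. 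This case analysis, which rules out every configuration of where $e_2$ sits relative to $u,v,w$, is the step I expect to be the main (if routine) obstacle. One must also make sure the endpoints are distinct and the lengths stay within $2\le\ell\le 2k-2$.

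Finally, I would note that threats created earlier were already killed, because Blocker claimed their missing edges. So after each Constructor move at most one live threat exists, and Blocker claims its missing edge. Hence Constructor never owns a $K_{2k-1}$, and $g(n,K_{2k-1},C_{2k})=0$.
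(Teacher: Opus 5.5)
Your proposal is correct and follows the paper's argument: Blocker always claims the missing edge of a threat, two threats with distinct missing edges must lie on distinct vertex sets sharing the new edge, and a $C_{2k}$ is formed by a vertex of $V(Q_2)\setminus V(Q_1)$ with two neighbours in $V(Q_1)\cap V(Q_2)$, closed up by a Hamiltonian path of $Q_1$. The differences are minor: the paper shows directly that such a vertex always exists, so your length-$3$ case and full panconnectedness are never needed, and it gets Hamiltonian-connectedness of $K_{2k-1}^-$ from Ore's theorem rather than checking it by hand.
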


To prove this theorem, we first establish a structural lemma regarding graphs that are missing exactly one edge from a complete graph. Let $K_r^-$ denote a graph on $r$ vertices isomorphic to a complete graph missing exactly one edge.

\begin{lemma} \label{lem:shared-edge}
Let $r \ge 5$. Suppose $H_1$ and $H_2$ are two copies of $K_r^-$ on distinct vertex sets $V_1$ and $V_2$. If $H_1$ and $H_2$ share at least one edge $f=uv$, then their union $H_1 \cup H_2$ contains a cycle of length $r+1$.
\end{lemma}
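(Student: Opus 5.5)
The plan is to build the $(r+1)$-cycle explicitly from a Hamiltonian path of $H_1$ and a Hamiltonian path of $H_2$. The first step is to make the hypotheses concrete. Since $V_1\neq V_2$ and $|V_1|=|V_2|=r$, there is a vertex $w\in V_2\setminus V_1$. The shared edge $f=uv$ lies in both graphs, so $u,v\in V_1\cap V_2$. Let $e_1=a_1b_1$ be the missing edge of $H_1$ and $e_2=a_2b_2$ the missing edge of $H_2$.

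The key observation is that $K_r^-$ with $r\ge 4$ is Hamiltonian-connected between suitable endpoints. We want a $u$--$v$ path in $H_1$ that visits all of $V_1$, has length $r-1$, and avoids the edge $f$ itself. Then we replace the edge $uv$ of the cycle by the detour $u\,w\,v$ through $H_2$. Concretely:
\begin{itemize}
\item[(i)] Find a Hamiltonian $u$--$v$ path $P$ in $H_1$. It has $r-1$ edges and uses $r$ vertices.
\item[(ii)] If $uw,wv\in E(H_2)$, then $P$ together with the path $u\,w\,v$ forms a cycle on $r+1$ vertices, because $w\notin V_1$.
\end{itemize}

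Step (i) should be routine. The $r-2$ internal vertices $V_1\setminus\{u,v\}$ induce $K_{r-2}$ minus at most one edge. We must order them $z_1,\dots,z_{r-2}$ so that three conditions hold: consecutive pairs avoid $e_1$, the pair $uz_1$ avoids $e_1$, and the pair $z_{r-2}v$ avoids $e_1$. Since only one edge is missing and $r-2\ge 3$, a small case check handles this. There are three cases: $e_1$ is internal, $e_1$ is incident to $u$, or $e_1$ is incident to $v$. Note that $e_1\neq uv$ because $f\in H_1$. In each case we simply place the endpoint of $e_1$ away from the forbidden position.

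Step (ii) is where the main obstacle lies, because the missing edge $e_2$ might be exactly $uw$ or $wv$. If $V_2\setminus V_1$ contains another vertex $w'$, the fix is easy. Since $e_2$ is a single edge, one of $w,w'$ is adjacent in $H_2$ to both $u$ and $v$, unless $e_2$ joins $w$ to $u$ and we also need $w'$. In that case $w'$ works, because $e_2$ is not incident to $w'$.

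So suppose $V_2\setminus V_1=\{w\}$ is a single vertex and $e_2=wu$, say. Then we reroute. We pick a vertex $x\in V_1\cap V_2\setminus\{u,v\}$ with $xw\in E(H_2)$; it exists since $r\ge 5$. We also arrange $x\neq$ the endpoints of $e_2$ other than $w$, so $wx,wv\in E(H_2)$. We then replace the edge $xv$ rather than $uv$. That is, we find a Hamiltonian $x$--$v$ path in $H_1$, which is possible by the same argument as step (i) provided $xv\neq e_1$, and close it with $x\,w\,v$. This gives a cycle on $r+1$ vertices.

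We must ensure the choice of $x$ avoids both missing edges. That is, $xv\neq e_1$ and $xw\neq e_2$. This rules out at most two vertices among the $r-2\ge 3$ candidates, so some $x$ survives. With this rerouting, the shared edge $f$ is really only used to guarantee $|V_1\cap V_2|\ge 2$ plus room to choose the pair. I expect the only delicate part to be bookkeeping these missing-edge coincidences, and I would present it as the two cases $|V_2\setminus V_1|\ge2$ and $|V_2\setminus V_1|=1$.
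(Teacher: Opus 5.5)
Your argument is correct and has the same skeleton as the paper's proof. Both find a vertex of $V_2\setminus V_1$ with two $H_2$-neighbours in $V_1\cap V_2$, and then close a Hamiltonian path of $H_1$ between those two neighbours.

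The differences are in the details:
\begin{itemize}
\item The paper splits on $|V_1\cap V_2|=2$ versus $|V_1\cap V_2|\ge 3$. In the latter case any $y\in V_2\setminus V_1$ works.
\item The paper gets the Hamiltonian path from Ore's condition for Hamiltonian-connectedness, $2r-4\ge r+1$. So it never has to care whether the two endpoints form the missing edge $e_1$.
\item You build the path by hand. This is more elementary, but it only covers endpoint pairs that are edges of $H_1$. That is what forces the extra constraint $xv\ne e_1$ in your rerouting case.
\end{itemize}

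There is one small slip in that rerouting case. When $V_2\setminus V_1=\{w\}$, we have $|V_1\cap V_2|=r-1$. So the pool $(V_1\cap V_2)\setminus\{u,v\}$ has $r-3$ vertices, not $r-2$. For $r=5$ that is only two, so ``at most two excluded'' would not by itself leave a survivor.

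The slip is harmless. Once $e_2=wu$, the condition $xw\ne e_2$ holds automatically for every $x\ne u$. So only $xv\ne e_1$ excludes anything, which is at most one vertex from a pool of $r-3\ge 2$. With this corrected count, your proof goes through.
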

\begin{proof}
Because $V_1 \neq V_2$, there is at least one vertex $y \in V_2 \setminus V_1$. We first show that we can always find such a $y$ that is adjacent to at least two distinct vertices in $V_1 \cap V_2$. 

Since $H_1$ and $H_2$ share the edge $f=uv$, we know $u, v \in V_1 \cap V_2$.
If $V_1 \cap V_2 = \{u, v\}$, then $|V_2 \setminus V_1| = r - 2 \ge 3$. The graph $H_2$ is missing at most one edge in total, which means this missing edge is incident to at most two vertices in the entire graph. Since $|V_2 \setminus V_1| \ge 3$, by the pigeonhole principle, there must be at least one vertex $y \in V_2 \setminus V_1$ that is not incident to the missing edge of $H_2$. Therefore, $y$ is adjacent to both $u$ and $v$.
If $|V_1 \cap V_2| \ge 3$, we pick any $y \in V_2 \setminus V_1$. Since $y$ misses at most one edge in $H_2$, it is adjacent to all but at most one vertex in $V_1\cap V_2$. Because $|V_1 \cap V_2| \ge 3$, $y$ is adjacent to at least two distinct vertices in $V_1 \cap V_2$.

In either case, we have a vertex $y \in V_2 \setminus V_1$ and two distinct vertices $a, b \in V_1$ such that $ya$ and $yb$ are edges in $H_2$.

Now consider the graph $H_1$. For any $r \ge 5$, the graph $K_r^-$ is Hamiltonian connected. To see this, observe that the sum of the degrees of the two non-adjacent vertices in $K_r^-$ is $(r-2) + (r-2) = 2r-4$. Since $r \ge 5$, we have $2r-4 \ge r+1$. By Ore's theorem for Hamiltonian-connected graphs, this guarantees a Hamiltonian path between any pair of distinct vertices. Therefore, there exists a path in $H_1$ from $a$ to $b$ that visits every vertex in $V_1$ exactly once. This path has length $r-1$. By adding the edges $ya$ and $yb$ to this path, we form a cycle of length exactly $r+1$.
\end{proof}

With this structural tool established, we now provide the proof of the main theorem of this appendix.

\begin{proof}[Proof of Theorem \ref{thm:zero-score}]
Let $r = 2k-1$. Since $k \ge 3$, we have $r \ge 5$. We describe a strategy for Blocker that prevents Constructor from ever completing a copy of $K_r$.

We call an unclaimed edge $e$ an open threat if adding $e$ to Constructor's current graph would create a copy of $K_r$. This implies Constructor's graph already contains a copy of $K_r^-$, and $e$ is the unique missing edge. Blocker maintains the invariant that, immediately after his turn, there are no open threats on the board. Since the board is initially empty, the invariant trivially holds at the start.

Suppose the invariant holds, and Constructor has just claimed an edge $f$. First, observe that Constructor cannot complete a $K_r$ with this single move. By the invariant, there were no open threats on the board prior to her turn, meaning no $K_r^-$ was waiting for its final edge.

If this move creates any new open threats, these threats must rely on $f$. Thus, $f$ must belong to any newly formed $K_r^-$.

Assume for contradiction that Constructor's move creates two distinct open threats, $e_1$ and $e_2$. This means her graph now contains two copies of $K_r^-$, say $H_1$ missing $e_1$ and $H_2$ missing $e_2$, and both contain $f$. 

If $V(H_1) = V(H_2)$, the set of claimed edges on this vertex set must be exactly the edges of $K_r$ excluding $e_1$ and $e_2$. However, for $e_1$ to be an open threat, adding it must complete a $K_r$, which means all other edges must already be claimed. This forces $e_2$ to be a claimed edge, contradicting the definition that $e_2$ is an unclaimed open threat. Thus, $V(H_1) \neq V(H_2)$.

Since $H_1$ and $H_2$ are distinct copies of $K_r^-$ on different vertex sets sharing the edge $f$, Lemma \ref{lem:shared-edge} states that their union contains a cycle of length $r+1$. Since $r = 2k-1$, this cycle has length $2k$. This means Constructor's graph now contains a $C_{2k}$. 

However, the rules of the game strictly forbid Constructor from claiming any edge that creates a $C_{2k}$. Therefore, a legal move by Constructor can never create two open threats simultaneously. Since every legal move creates at most one open threat, Blocker can simply claim that unique threat on his turn. This maintains the invariant, ensuring Constructor can never complete a $K_{2k-1}$.
\end{proof}


\medskip

\begin{thebibliography}{GGMV20}

\bibitem[BCE25]{BCE25}
J.~Balogh, C.~Chen, and S.~English.
\newblock On the constructor-blocker game.
\newblock {\em Journal of Graph Theory}, 108, 2025.

\bibitem[Ben66]{B}
C.~T. Benson.
\newblock Minimal regular graphs of girths eight and twelve.
\newblock {\em Canadian Journal of Mathematics}, 18:1091--1094, 1966.

\bibitem[BMPP25]{BMPP25}
Chlo{\'e} Boisson, Yannick Mogge, Aline Parreau, and Th{\'e}o Pierron.
\newblock Creating triangles in {C}onstructor-{B}locker games.
\newblock {\em arXiv preprint arXiv:2510.05811}, 2025.

\bibitem[BS74]{BS}
J.~A. Bondy and M.~Simonovits.
\newblock Cycles of even length in graphs.
\newblock {\em Journal of Combinatorial Theory, Series B}, 16(2):97--105, 1974.

\bibitem[Con21]{Conlon21}
D.~Conlon.
\newblock Extremal numbers of cycles revisited.
\newblock {\em Amer. Math. Monthly}, 128(5):464--466, 2021.

\bibitem[ERS66]{ERS}
P.~Erd{\H{o}}s, A.~R{\'e}nyi, and V.~T. S{\'o}s.
\newblock On a problem of graph theory.
\newblock {\em Studia Scientiarum Mathematicarum Hungarica}, 1:215--235, 1966.

\bibitem[GGMV20]{GGMV20}
D.~Gerbner, E.~Gy{\H{o}}ri, A.~Methuku, and M.~Vizer.
\newblock Generalized {T}ur{\'a}n problems for even cycles.
\newblock {\em Journal of Combinatorial Theory, Series B}, 145:169--213, 2020.

\bibitem[GS20]{GS20}
L.~Gishboliner and A.~Shapira.
\newblock A generalized tur{\'a}n problem and its applications.
\newblock {\em International Mathematics Research Notices}, 2020(11):3417--3452, 2020.

\bibitem[KNS64]{KNS}
G.~O.~H. Katona, T.~Nemetz, and M.~Simonovits.
\newblock On a graph-problem of tur{\'a}n.
\newblock {\em Matematikai Lapok}, 15:228--238, 1964.

\bibitem[PSV24]{PSV24}
B.~Patk{\'o}s, M.~Stojakovi{\'c}, and M.~Vizer.
\newblock The constructor-blocker game.
\newblock {\em Applicable Analysis and Discrete Mathematics}, 18:1--15, 2024.

\bibitem[Wen91]{W}
R.~Wenger.
\newblock Extremal graphs with no \(c^4\)'s, \(c^6\)'s, or \(c^{10}\)'s.
\newblock {\em Journal of Combinatorial Theory, Series B}, 52(1):113--116, 1991.

\end{thebibliography}
\end{document}